\let\proof\@undefined
\let\endproof\@undefined
\newcommand{\mc}[1]{\mathcal{#1}}
\newcommand{\bea}{\begin{eqnarray}}
\newcommand{\eea}{\end{eqnarray}}
\newcommand{\beas}{\begin{eqnarray*}}
\newcommand{\eeas}{\end{eqnarray*}}
\newcommand{\leftm}{\left[\begin{array}}
\newcommand{\rightm}{\end{array}\right]}
\newcommand{\reals}{\mbox{$\mathbb R$}}
\newcommand{\ones}{\textbf{1}}
\newcommand{\R}{{T }}
\newtheorem{thm}{Theorem}[section]
\newtheorem{cor}[thm]{Corollary}
\newtheorem{prop}[thm]{Proposition}
\newtheorem{definition}[thm]{Definition}
\newtheorem{rem}[thm]{Remark}
\def\diag{{\mbox{\bf diag}}}
\def\rk{{\mbox{\bf rk}}}
\newcommand\AF[1]{}
\definecolor{magenta}{rgb}{1,0,1}
\newcommand{\magenta}[1]{{ #1}}
\newcommand\AFmod[2]{\magenta{#1}}
\def\trajA{{\includegraphics[width=0.6\columnwidth]{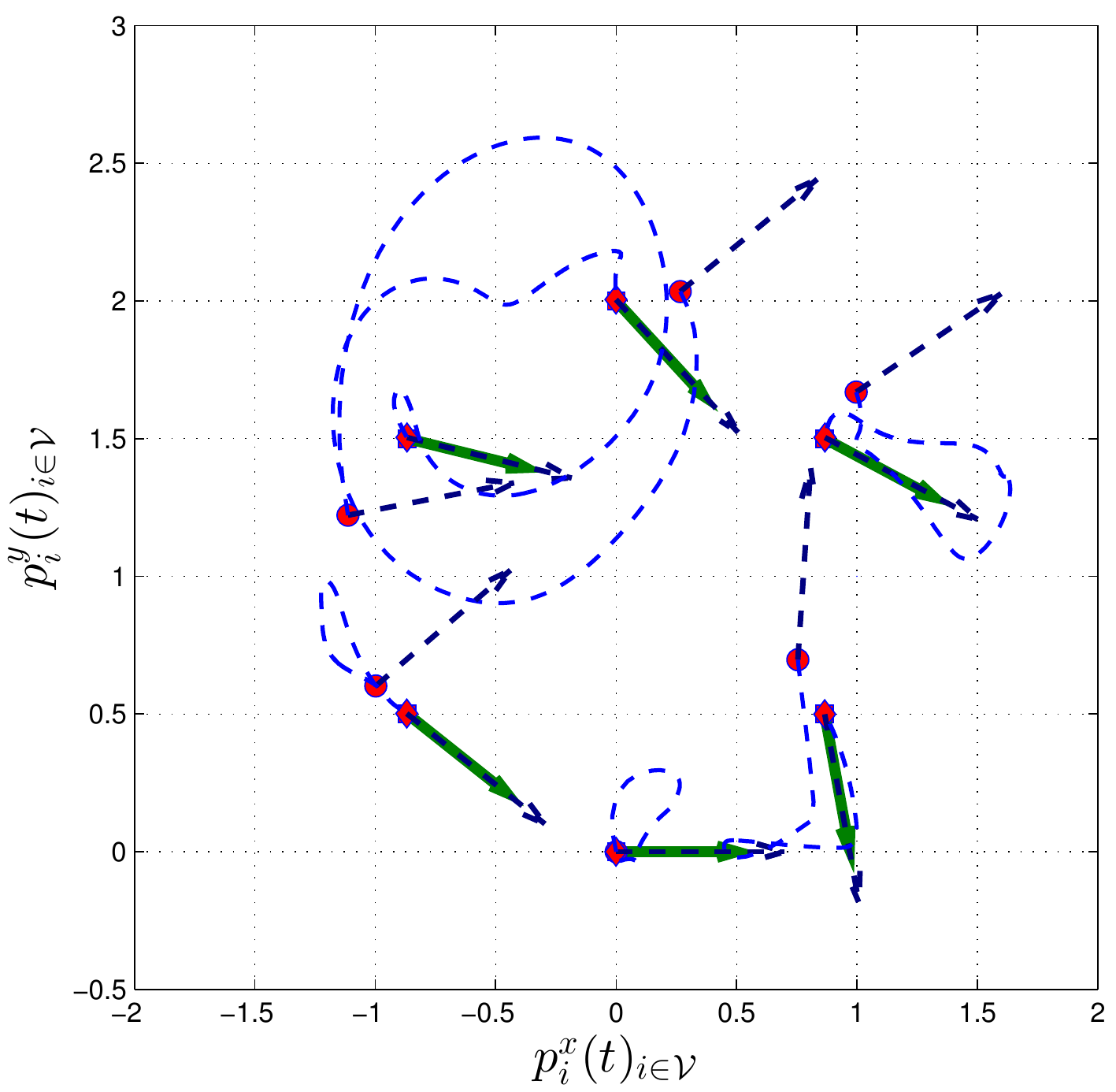}}}
\def\eA{{\includegraphics[width=0.6\columnwidth]{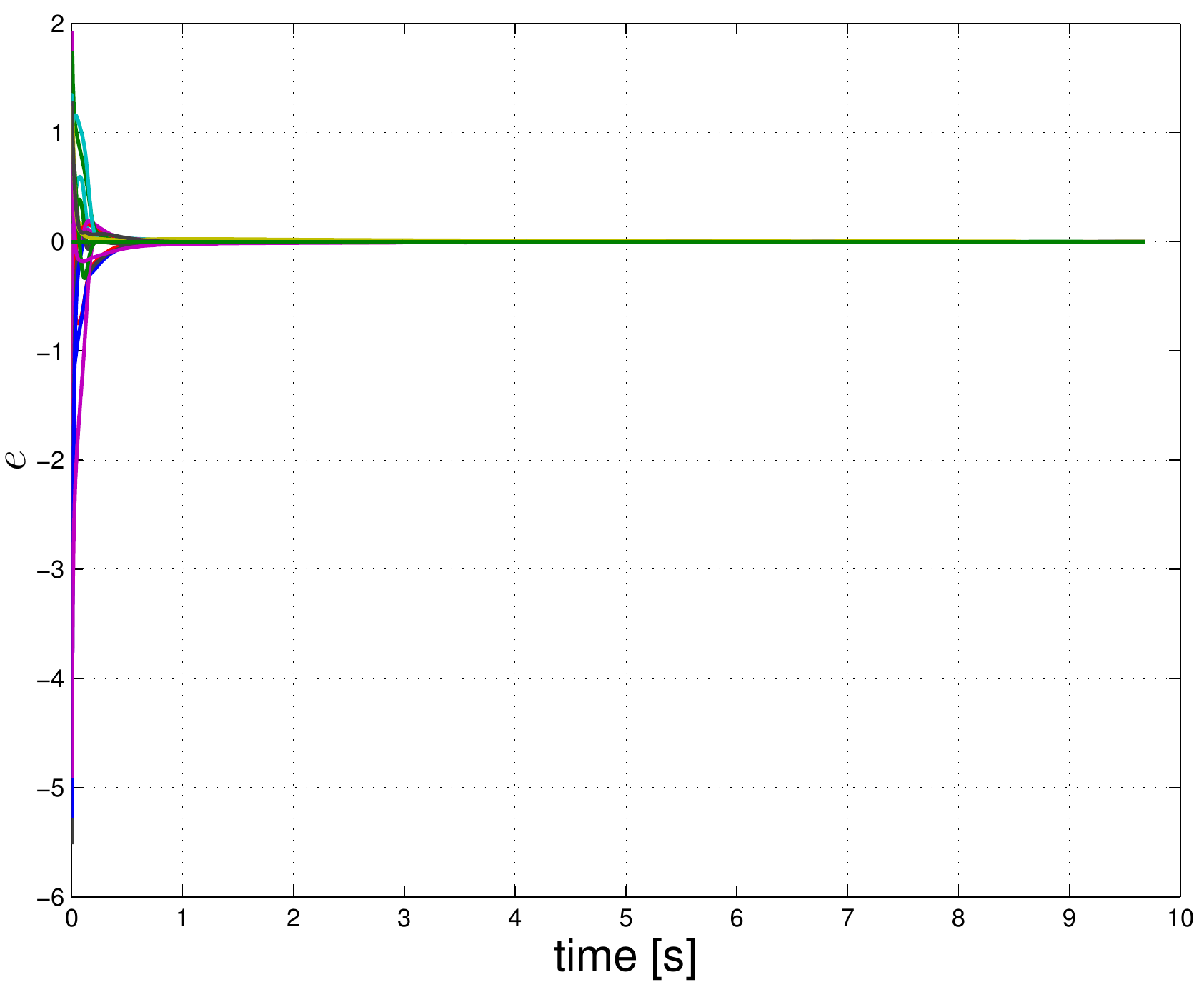}}}
\def\epA{{\includegraphics[width=0.6\columnwidth]{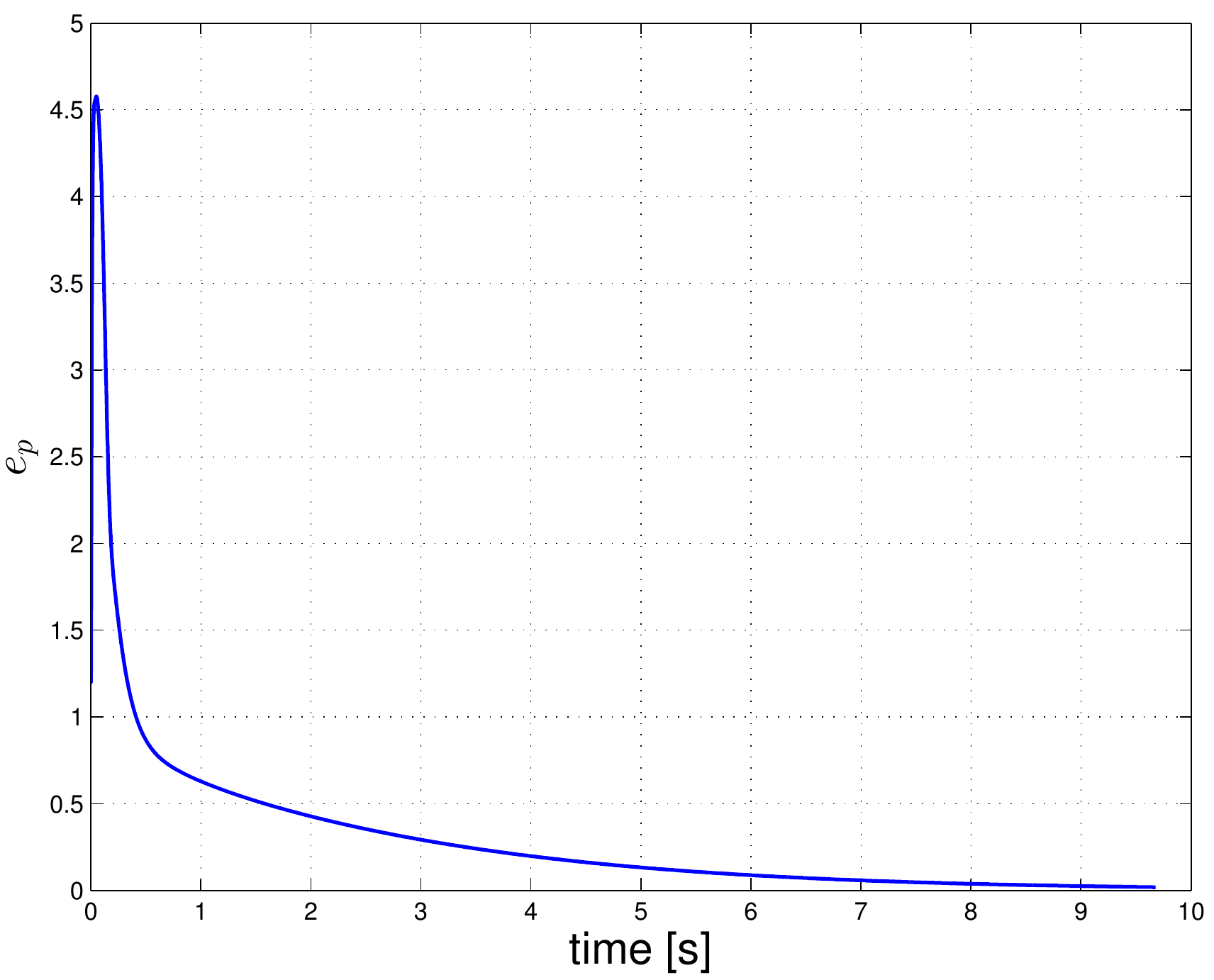}}}
\def\trajB{{\includegraphics[width=0.6\columnwidth]{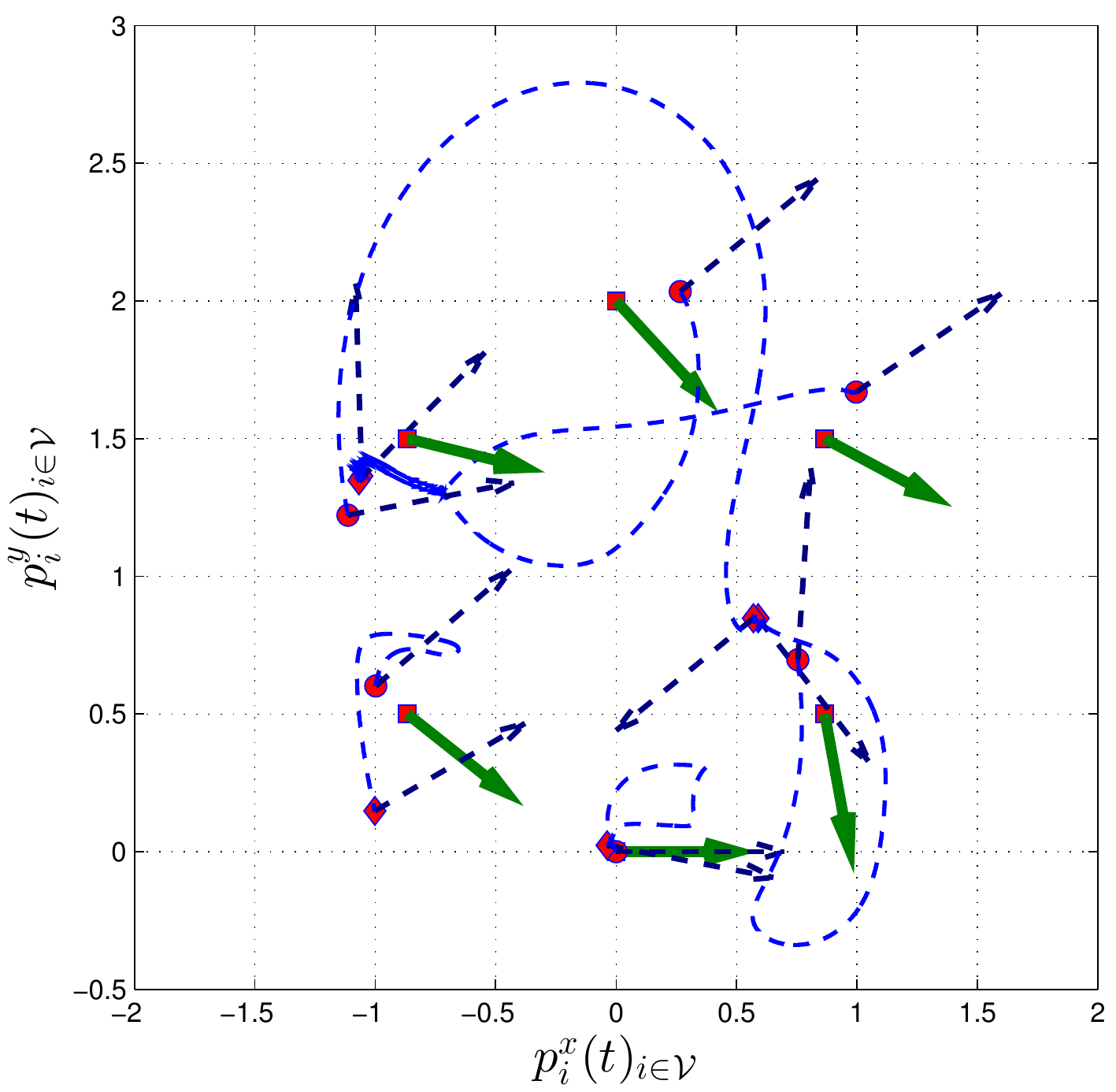}}}
\def\eB{{\includegraphics[width=0.6\columnwidth]{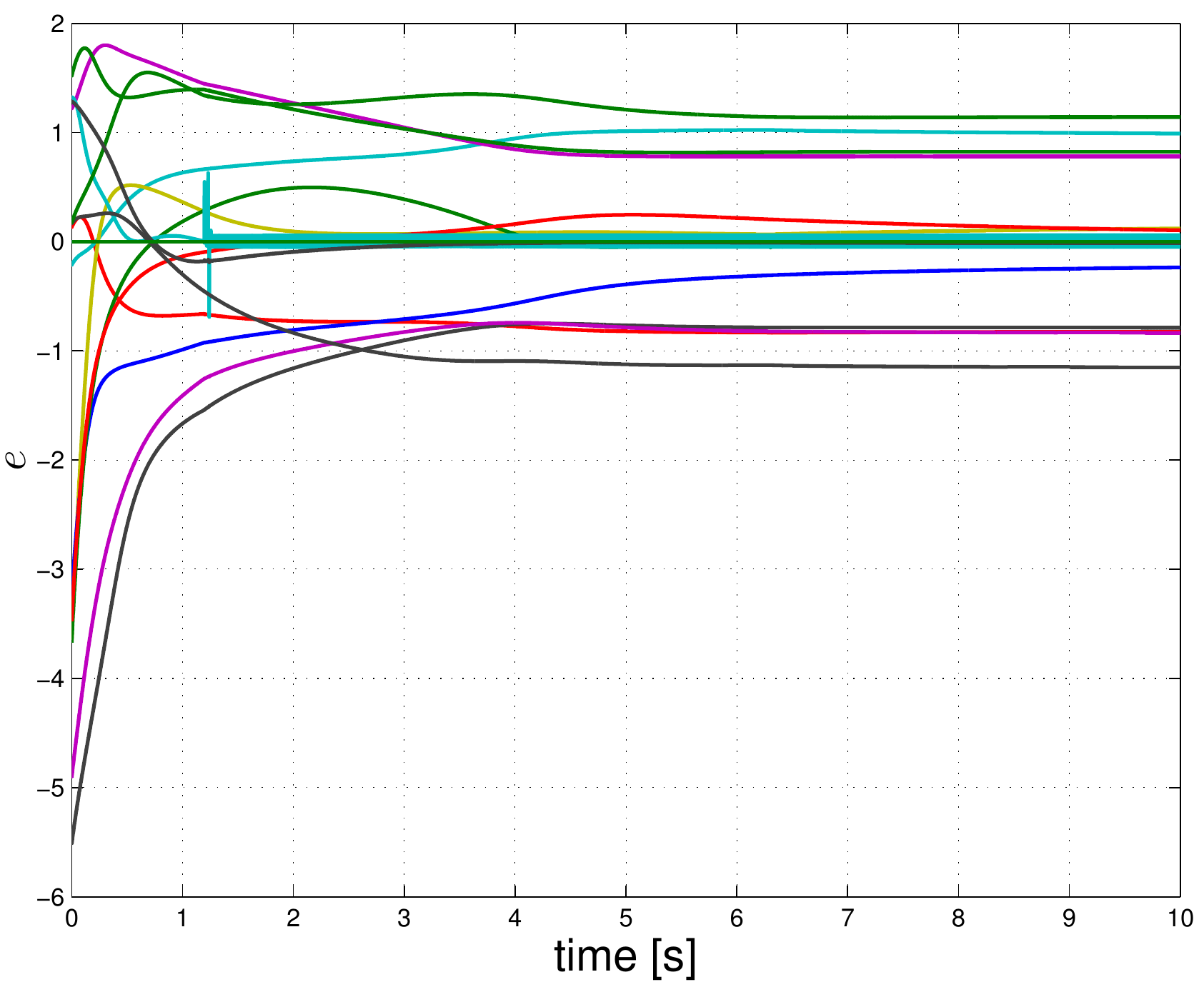}}}
\def\epB{{\includegraphics[width=0.6\columnwidth]{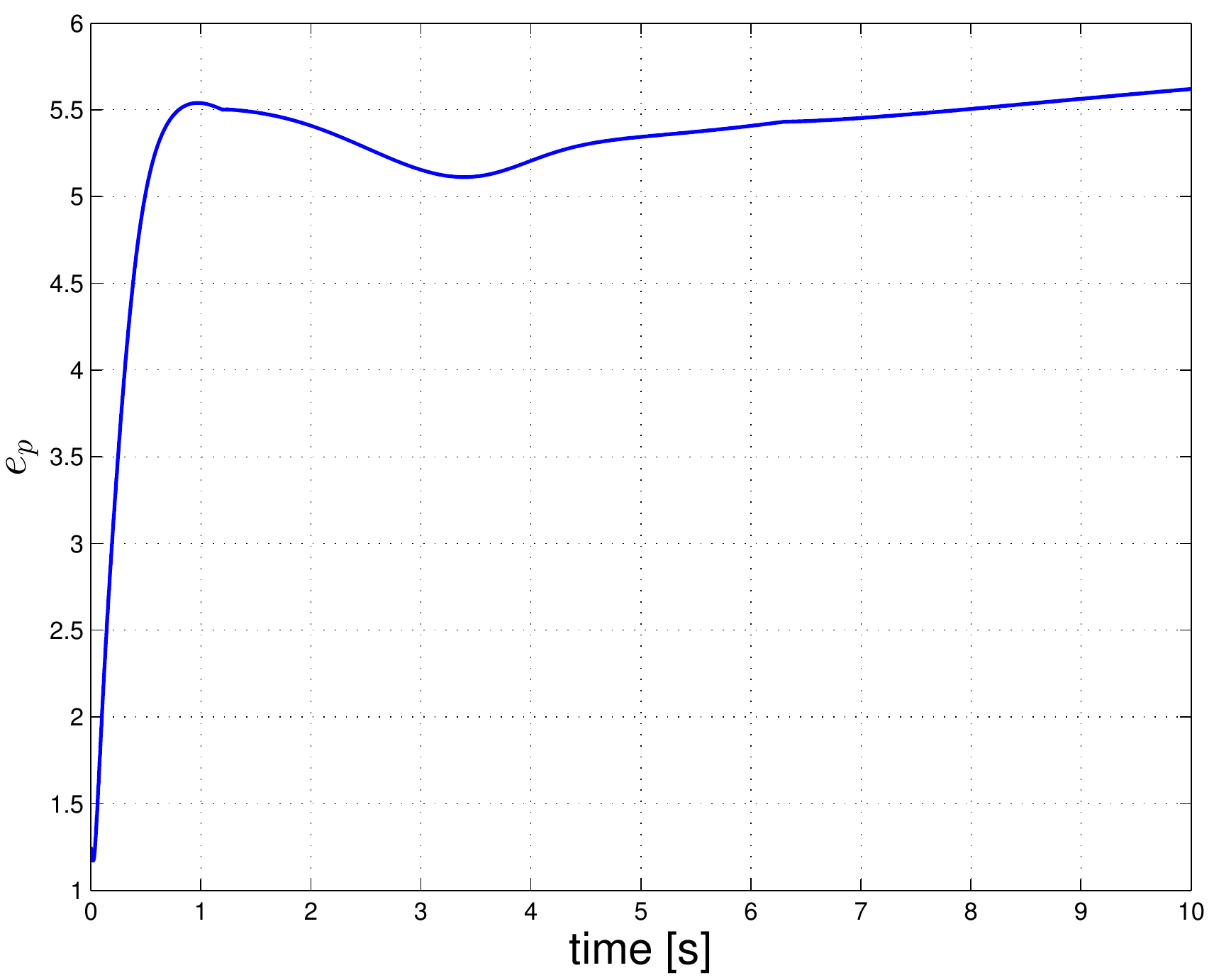}}}
\begin{document}

\title{\LARGE \bf Rigidity Theory in $SE(2)$ for {Unscaled}{} Relative \\Position Estimation using only Bearing Measurements}

\author{Daniel Zelazo, Antonio Franchi, Paolo Robuffo Giordano
\thanks{Daniel Zelazo is with the Faculty of Aerospace Engineering at the Technion-Israel Institute of Technology, Haifa, Israel {\tt \scriptsize dzelazo@technion.ac.il}.} 
\thanks{Antonio Franchi is with with the Max Planck Institute for Biological Cybernetics, Spemannstra\ss{}e 38, 72076 T\"ubingen, Germany {\tt \scriptsize antonio.franchi@tue.mpg.de}.} 
\thanks{Paolo Robuffo Giordano is with CNRS at Irisa and Inria Rennes Bretagne Atlantique, Campus de Beaulieu, 35042 Rennes Cedex, France {\tt \scriptsize prg@irisa.fr}.}
}

\maketitle

\begin{abstract}
This work considers the problem of estimating the{unscaled}{} relative positions of a multi-robot team in a common reference frame from bearing-only measurements.  Each robot has access to a relative bearing measurement taken from the local body frame of the robot, and the robots have no knowledge of a common or inertial reference frame.  A corresponding extension of rigidity theory is made for frameworks embedded in the \emph{special Euclidean group} $SE(2) = \reals^2 \times \mc{S}^1$.  We introduce definitions describing rigidity for $SE(2)$ frameworks and provide necessary and sufficient conditions for when such a framework is \emph{infinitesimally rigid} in $SE(2)$.  Analogous to the rigidity matrix for point formations, we introduce the \emph{directed bearing rigidity matrix} and show that an $SE(2)$ framework is infinitesimally rigid if and only if the rank of this matrix is equal to $2|\mc{V}|-4$, where $|\mc{V}|$ is the number of agents in the ensemble.  The directed bearing rigidity matrix and its properties are then used in the implementation and convergence proof of a distributed estimator to determine the {unscaled}{} relative positions in a common frame.  Some simulation results are also given to support the analysis.
\end{abstract}

\IEEEpeerreviewmaketitle

\section{Introduction}

Control and estimation problems for teams of mobile robots poses many challenges for real-world implementations.   These problems are motivated by diverse application domains including deep space interferometry missions, distributed sensing and data collection, and civilian search and rescue operations, amongst others \cite{2002-AkySanCay, 2008-AndFidYu_Van, Bristow2000, 2012q-FraMasGraRylBueRob,2010-MesEge, 2012f-FraSecRylBueRob, 2006-Mur,2013g-FraOriSte}.  Many of these applications involve operating a robot team in what can be considered as a \emph{harsh environment}.  That is, access to certain measurements in a common reference frame (i.e., inertial position measurements from GPS) are not available.  This motivates control and estimation strategies that can rely on sensing and communication capabilities that do not depend on knowledge of a common reference frame.

When \emph{range measurements} are available then the theory of \emph{formation rigidity} provides the correct framework for considering formation control problems.  Rigidity is a combinatorial theory for characterizing the ``stiffness" or ``flexibility" of structures formed by rigid bodies connected by flexible linkages or hinges.  It has found numerous applications in various engineering sciences and also as a formal mathematical discipline \cite{belabbas_globalstabformation_TAC2013, Connelly2009, Jacobs1997, 1970-Lam, 2009-ShaFidAnd, 1985-TayWhi}.  In \cite{2009-KriBroFra} it was shown that formation stabilization using distance measurements can be achieved only if rigidity of the formation is maintained.  
Formation rigidity also provides a necessary condition for estimating relative positions using only relative distance measurements \cite{2006-AspEreGolMorWhiYanAndBel,2010-CalCarWei}.  Distributed control strategies for dynamically maintaining the rigidity property of a formation was recently considered by the authors in \cite{2012-ZelFraRob, Zelazo2013a}. 

A related concept to formation rigidity is known as \emph{parallel rigidity}.  Whereas rigidity theory is useful for maintaing formations with \emph{fixed distances} between neighboring agents, parallel rigidity focuses on maintain \emph{formation shapes}; that is it attempts to keep the \emph{bearing vector} between neighboring agents constant.   Parallel rigidity was used in \cite{Bishop2011},  \cite{Eren2012}, and \cite{Franchi2012a} for deriving distributed control laws for controlling formations with bearing measurements.  In, \cite{Eren2007}, parallel rigidity was used for the localization problem in robotic networks using bearing measurements. {In~\cite{2012f-FraSecRylBueRob} the authors proposed a bearing-only formation controller for agents in 3-dimensional space requiring only relative bearing measurements, converging almost globally, and maintaining bounded inter-agent distances despite the lack of direct metric information.}{}

The concepts of formation and parallel rigidity have practical relevance for multi-agent systems in that they provide the appropriate analytical framework for defining formations obtained from sensed measurements.  For formation rigidity, the measurements are the form of \emph{distances}, while for parallel rigidity they are \emph{directions}.  In both cases, however, it is assumed that the robots or agents comprising the systems are essentially point-masses; they have no orientation relative to a common world frame.  In many real-world scenarios, however, the sensors used to obtain relative measurements (bearing, distance, etc.) are likely to be physically coupled to the frame of the robot.  Furthermore, the sensors might also introduce additional constraints such as field-of-view restrictions or line-of-sight requirements.  In these scenarios, the attitude of each agent must be considered to define the sensing graph.

In many distributed control strategies for multi-robot teams using relative sensing, an implicit requirement is the team have knowledge of a common reference frame to generate the correct velocity input vectors.  This information is either known directly from special sensors or communication with agents endowed with this information, or it must be estimated by each agent.  This problem was considered in \cite{Franchi2012a} for special classes  of graphs {(and extended to generic graphs using communication)}{} and in \cite{Zelazo2013a} when only distance measurements are available.  

\subsection*{{Related Work and Contribution}{}} \label{prelim}

This paper considers the {unscaled}{} relative  position {(URP)}{} estimation problem for a team of agents  that have access to bearing measurements. 
{The adjective `unscaled' means that the positions of the agents are estimated up to a common scale factor.}{}
 The bearing sensor is attached to the body frame of each agent, and consequently the \emph{attitude} of each agent (as measured from a common inertial frame) will influence which agents can be sensed.  In this direction, we consider each agent as a point in $SE(2)$; it has a position coordinate in $\reals^2$ and an attitude on the 1-dimensional manifold on the unit circle, $\mc{S}^1$.  The bearing measurements available for each agent induces a \emph{directed} sensing graph.  A contribution of this work is to provide necessary and sufficient conditions on the underlying sensing graph and positions of each agent in $SE(2)$ for solving the {URP} {relative position} estimation problem with only bearing measurements.

{Estimation using only relative bearings  as exteroceptive measurements has been considered  also in~\cite{2011k-SteCogFraOri,2012c-CogSteFraOriBue}. However, in those works the robots also had access to egomotion sensors in order to disambiguate the anonymity of the measurements.  This is in contrast to the method proposed here which which does not require such sensors.}{}

  Another similar problem set-up was also considered in \cite{Eren2003, Eren2012, Bishop2011}.  The main distinction with this work is the insistence that the bearing measurements between agents are expressed in the \emph{local} frame of the agent.  
This turns out to be an important assumption and requires a new extension to the theory of rigidity.

This then motivates the study of rigidity for formations in $SE(2)$, which is the main contribution of this work.  Similar to parallel rigidity, the objective for formations in $SE(2)$ is to define a formation shape while also maintaing the relative bearings between each agent.  The main distinction is the bearing measurements are expressed in the \emph{local} frame of each agent, and the corresponding statements on $SE(2)$ rigidity explicitly handle this distinction.  Our approach is to mirror the development of formation rigidity, such as can be found in \cite{Asimow1979}, but for frameworks where each node in the directed graph is mapped to a point in $SE(2)$.  We derive a matrix we term the \emph{directed bearing rigidity matrix} and show that a formation is infinitesimally rigid in $SE(2)$ if and only if the dimension of the kernel of this matrix is equal to four.  Furthermore, we show the infinitesimal motions that span the kernel are the trivial motions of a formation in $SE(2)$, namely the translations, dilations, and coordinated rotations of the formation.  The directed bearing rigidity matrix appears in the relative position estimator and provides the essential ingredient for the convergence proof of the estimator.

The paper is organized as follows.  A brief review of concepts from rigidity theory with an emphasis on parallel rigidity is provided in $\S$\ref{sec:parallel_rigidity}.  The development of rigidity theory for $SE(2)$ is given in $\S$\ref{sec:SE2_rigidity}.  The relative position estimation problem is given in $\S$\ref{sec:estimator}, and some numerical simulation examples are given in $\S$\ref{sec:sims}.  Finally, concluding remarks and future research directions are discussed in $\S$\ref{sec:conclusion}.

\subsection*{Preliminaries and Notations} \label{prelim}
The notation employed is standard.  Matrices are denoted by capital letters (e.g.,~$A$), and vectors by lower case letters (e.g.,~$x$).  The rank of a matrix $A$ is denoted $\rk[A]$.  Diagonal matrices will be written as $D = \diag{\{d_1,\ldots,d_n\}}$.  A matrix and/or a vector that consists of all zero entries will be denoted by ${\bf 0}$; whereas, `$0$' will simply denote the scalar zero.  The $n \times n$ identity matrix is denoted as $I_n$.  The set of real numbers will be denoted as $\reals$, the 1-dimensional manifold on the unit circle as $\mc{S}^1$, and $SE(2) = \reals^2 \times \mc{S}^1$ is the Special Euclidean Group 2. The standard Euclidean $2$-norm for vectors is denoted $\| \, . \,  \|$.  The Kronecker product of two matrices $A$ and $B$ is written as $A \otimes B$ \cite{Horn1991}.  For sets $A$ and $B$, $A-B$ denotes the set difference, $A-B = \{x  \,|\, x \in A, \, x \notin B\}$.  The null-space of an operator $F$ is denoted $\mc{N}\left[F\right]$.

Directed graphs and the matrices associated with them will be widely used in this work; see, e.g., \cite{2001-GodRoy}.  A directed graph $\mc{G}$ is specified by a vertex set $\mc{V}$, an edge set $\mc{E} \subseteq \mc{V} \times \mc{V}$ whose elements characterize the incidence relation between distinct pairs of $\mc{V}$.  A directed edge $e = (v,u) \in \mc{E}$ is an ordered pair, and $v$ is called the head of $e$ and $u$ the tail of $e$.  The \emph{neighborhood} of the vertex $i$ is the set $\mc{N}_i = \{j \in \mc{V} \, | \, (i,j) \in \mc{E}\}$, and the \emph{out-degree} of vertex $i$ is $d_{out}(i)=|\mc{N}_i|$.  
The incidence matrix $E(\mc{G}) \in \reals^{|\mc{V}| \times |\mc{E}|}$ is a $\{0,\pm 1\}$-matrix
with rows and columns indexed by the vertices and edges of $\mc{G}$ such that $[E(\mc{G})]_{ik}$ has the value `$+1$' if node $i$ is the head of edge $e_k$, `$-1$' if it is the tail of $e_k$, and `0' otherwise.  The \emph{complete directed graph}, denoted $K_{|\mc{V}|}$ is a graph with all possible directed edges (i.e. $|\mc{E}| = |\mc{V}|\left(|\mc{V}|-1\right)$).  The graph Laplacian of the matrix $\mc{G}$ is defined as $L(\mc{G}) = E(\mc{G})E(\mc{G})^T$.

\section{Parallel Rigidity Theory}\label{sec:parallel_rigidity}

In this section we briefly review some fundamental concepts of parallel rigidity.  For an overview on distance rigidity theory, please see \cite{Asimow1979, Jackson2007}.  A more detailed treatment parallel rigidity can be found in \cite{Eren2012, 2003-EreWhiMorBelAnd}.%
Parallel rigidity is built upon the notion of a \emph{bar-and-joint framework} consisting of an undirected graph $\mc{G}=(\mc{V},\mc{E})$ and a function mapping each node of the graph to a point in Euclidean space.  In this work we consider the space $\reals^2$ and denote the map as $p : \mc{V} \rightarrow \reals^2$.  Thus, a framework is the pair $(\mc{G},p)$.  In the following we denote by $p(\mc{V}) = \leftm{ccc} p(v_1)^T & \cdots & p(v_{|\mc{V}|}) \rightm \in \reals^{2|\mc{V}|}$ the stacked position vector for the framework.

Parallel rigidity is concerned with angles formed between pairs of points and the lines joining them (i.e. the edges in the graph).  These angles are measured with respect to some common reference frame.  

\begin{definition}[{\small Equivalent Frameworks}]
Two frameworks $(\mc{G},p_1)$ and $(\mc{G},p_2)$ are \emph{equivalent} if $((p_1(v_i)-p_1(v_j))^\perp)^T(p_2(v_i)-p_2(v_j)) = 0$ for all $\{v_i,v_j\} \in \mc{E}$, where $x^\perp$ denotes a $\pi/2$ counterclockwise rotation of the vector $x$.
\end{definition}

\begin{definition}[{\small Congruent Frameworks}]
Two frameworks $(\mc{G},p_1)$ and $(\mc{G},p_2)$ are \emph{congruent} if $((p_1(v_i)-p_1(v_j))^\perp)^T(p_2(v_i)-p_2(v_j)) = 0$ for all pairs $v_i,v_j \in \mc{V}$.
\end{definition}

Observe that for two frameworks to be congruent requires that the line segment between any pair of nodes in one framework is \emph{parallel} to the corresponding segment in the other framework.  Thus, two parallel congruent frameworks are related by an appropriate sequence of \emph{rigid-body translations} and \emph{dilations} of the framework.

\begin{definition}[{\small Global Rigidity}] 
A framework $(\mc{G},p)$ is \emph{parallel globally rigid} if all parallel equivalent frameworks to $(\mc{G},p)$ are also parallel congruent to $(\mc{G},p)$.  
\end{definition}

Consider now a trajectory defined by the time-varying position vector $q(t)\in \reals^{2|\mc{V}|}$.  We consider trajectories that are equivalent to a given framework $(\mc{G},p)$ for all time.  This induces a set of linear constraints that can be expressed as
\bea
 ((p(v_i)-p(v_j))^{\perp})^T(\dot{q}_i(t)-\dot{q}_j(t)) &=0& \label{par_infrigid}
\eea
for all $\{v_i,v_j\} \in \mc{E}$.  Here we employed a short-hand notation $q_i(t)$ to denote the position of node $v_i$ in the time-varying framework $(\mc{G},q(t))$.  The velocities $\dot{q}(t)$ that satisfy the above constraints are referred to as the \emph{infinitesimal motions} of a framework.  
Frameworks with infinitesimal motions that satisfy (\ref{par_infrigid}) and result in only rigid-body translations and dilations are known as \emph{infinitesimally rigid}.

The $|\mc{E}|$ linear constraints given in (\ref{par_infrigid}) can be equivalently written in matrix form as
\bea
R_{\|,\mc{G}}(p(\mc{V}))\dot{q}(t) &=&0 \label{par_infrigid_mtx}.
\eea
The matrix $R_{\|,\mc{G}}(p(\mc{V})) \in \reals^{|\mc{E}|} \times 2|\mc{V}|$ is referred to as the \emph{parallel rigidity matrix}.  The null-space of these matrices thus describe the infinitesimal motions.  The main result of this section is summarized below.

\begin{thm}
A framework $(\mc{G},p)$ is parallel infinitesimally rigid if and only if $\rk[R_{\|,\mc{G}}(p(\mc{V}))] = 2|\mc{V}|-3$.   Furthermore, the three dimensional null-space of the parallel rigidity matrix are correspond to rigid-body translations and dilations of the framework. 
\end{thm}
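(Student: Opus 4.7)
The plan is to exhibit three linearly independent vectors in $\mc{N}\left[R_{\|,\mc{G}}(p(\mc{V}))\right]$, identify them with the trivial infinitesimal motions (translations and dilations), and then close the equivalence using the definition of parallel infinitesimal rigidity.

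First I would write down three candidate vectors in $\reals^{2|\mc{V}|}$: the two orthogonal translations $t_1 = \ones_{|\mc{V}|} \otimes e_1$ and $t_2 = \ones_{|\mc{V}|} \otimes e_2$, with $e_1,e_2$ the standard basis of $\reals^2$, together with the radial dilation $d = p(\mc{V})$. Membership in $\mc{N}\left[R_{\|,\mc{G}}(p(\mc{V}))\right]$ is checked by direct substitution into (\ref{par_infrigid}): for $t_1,t_2$ one has $\dot{q}_i-\dot{q}_j = \zeros$ on every edge, and for $d$ one has $\dot{q}_i-\dot{q}_j = p(v_i)-p(v_j)$, which is orthogonal to its own $\pi/2$-rotation since any planar vector is orthogonal to its perpendicular. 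Under the mild assumption that the configuration contains at least two distinct points, these three vectors are linearly independent, so $\dim \mc{N}\left[R_{\|,\mc{G}}(p(\mc{V}))\right] \geq 3$ and hence $\rk[R_{\|,\mc{G}}(p(\mc{V}))] \leq 2|\mc{V}|-3$ holds unconditionally.

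Next I would identify $\span\{t_1,t_2,d\}$ with the full infinitesimal generator of rigid-body translations and dilations of the framework: any translation contributes $\alpha_1 t_1 + \alpha_2 t_2$, while a uniform scaling about an arbitrary center contributes $\beta d$ plus an additive translation (to shift the dilation center), which still lies in $\span\{t_1,t_2,d\}$. With this identification in hand, parallel infinitesimal rigidity, namely that every $\dot{q}$ solving (\ref{par_infrigid_mtx}) must be a trivial motion, is equivalent to $\mc{N}\left[R_{\|,\mc{G}}(p(\mc{V}))\right] \subseteq \span\{t_1,t_2,d\}$. Combined with the reverse inclusion established above, this amounts to $\dim \mc{N}\left[R_{\|,\mc{G}}(p(\mc{V}))\right] = 3$, i.e., $\rk[R_{\|,\mc{G}}(p(\mc{V}))] = 2|\mc{V}|-3$, and at the same time pins down the null-space as the span of the three trivial motions.

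The step I expect to be the main obstacle is the clean characterization of the trivial-motion subspace itself: one must rule out any ``accidental'' velocity vector $\dot{q}$ that happens to satisfy all pairwise congruence conditions $((p(v_i)-p(v_j))^{\perp})^T(\dot{q}_i-\dot{q}_j)=0$ without coming from a translation or dilation. This rests on the planar fact that $(x^{\perp})^T y = 0$ with $x \neq 0$ forces $y = \alpha x$, and then arguing that the scalar $\alpha$ must be common across all pairs, which collapses $\dot{q}$ onto the three-parameter family $\span\{t_1,t_2,d\}$. The remaining steps are routine substitutions and dimension counts.
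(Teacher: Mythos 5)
The paper itself offers no proof of this statement: it is quoted in Section~\ref{sec:parallel_rigidity} as a background result from the parallel-rigidity literature (the cited works of Eren et al.), so there is no in-paper argument to compare yours against. Judged on its own terms, your proposal is essentially correct and is the standard argument. The two translations $t_1,t_2$ and the dilation $d=p(\mc{V})$ lie in $\mc{N}\left[R_{\|,\mc{G}}(p(\mc{V}))\right]$ by direct substitution into (\ref{par_infrigid}); they are linearly independent whenever not all points coincide; and since the paper \emph{defines} parallel infinitesimal rigidity as ``every solution of (\ref{par_infrigid_mtx}) is a rigid-body translation or dilation,'' rigidity is exactly the inclusion $\mc{N}\left[R_{\|,\mc{G}}(p(\mc{V}))\right]\subseteq \mathrm{span}\{t_1,t_2,d\}$. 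Together with the reverse inclusion you established, this is equivalent to $\dim\mc{N}\left[R_{\|,\mc{G}}(p(\mc{V}))\right]=3$, i.e.\ $\rk[R_{\|,\mc{G}}(p(\mc{V}))]=2|\mc{V}|-3$, and it simultaneously identifies the null-space with the trivial motions, which is the second claim of the theorem.

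One clarification: the step you flag as the main obstacle --- ruling out ``accidental'' velocities that satisfy the orthogonality conditions for \emph{all} pairs without being translations or dilations --- is not actually needed for this theorem under the paper's definition. Whether such motions exist for the given graph is precisely what (non)rigidity means, so there is nothing to rule out; the all-pairs lemma would only be required if infinitesimal rigidity were defined relative to the complete graph (as the paper later does for $SE(2)$ frameworks in Definition~\ref{def:inf_rigid_se2}), and in that setting it does need the three-point argument you sketch and genuinely fails for collinear configurations, which are then correctly classified as flexible. The only hypothesis you should state explicitly is that at least two points of the configuration are distinct, so that $t_1,t_2,d$ truly span a three-dimensional space and the implication $\rk[R_{\|,\mc{G}}(p(\mc{V}))]=2|\mc{V}|-3\Rightarrow$ rigid goes through.
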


\section{Rigidity in $SE(2)$}\label{sec:SE2_rigidity}

The concepts of distance and parallel rigidity introduced in $\S$\ref{sec:parallel_rigidity} provides a framework for describing formation shapes in $\reals^2$.  In this section, we extend these notions of rigidity for frameworks that are embedded $SE(2)$.
Our discussion follows closely the presentation of rigidity given in \cite{Asimow1979, Krick2009}.  To begin, we first modify the traditional bar-and-joint framework to handle points in $SE(2)$ as opposed to the Euclidean space $\reals^2$.

\begin{definition}\label{def:SE2-barjoint}
An \emph{$SE(2)$ framework} is the triple  $(\mc{G}, p,\psi)$, where $\mc{G}=(\mc{V},\mc{E})$ is a \emph{directed graph}, $p : \mc{V} \rightarrow \reals^2$ and $\psi : \mc{V} \rightarrow {\mc{S}^1}$ maps each vertex to a point in $SE(2)= \reals^2 \times \mc{S}^1$.
\end{definition}

\AF{
\begin{definition}\label{def:SE2-barjoint}
The $SE(2)$ \emph{bar-and-joint framework} is the couple  $(\mc{G}, q)$, where $\mc{G}=(\mc{V},\mc{E})$ is a \emph{directed graph}, $q : \mc{V} \rightarrow SE(2)$ maps each vertex to a point in $SE(2)$.
\end{definition}
}

We denote by $\chi(v)=(p(v),\psi(v)) \in SE(2)$ the position and attitude vector of node $v \in \mc{V}$. 
For notational convenience, we will refer to the vectors $\chi_p=p(\mc{V}) \in \reals^{2|\mc{V}|}$ and $\chi_\psi=\psi(\mc{V}) \in {\mc{S}^1}^{|\mc{V}|}$ as the position and attitude components of the complete framework configuration.  The vector $\chi(\mc{V}) \in SE(2)^{|\mc{V}|}$ is the stacked position and attitude vector for the complete framework.  We also denote by $\chi_p^x \in \reals^{|\mc{V}|}$ ($\chi_p^y$) as the $x$-coordinate ($y$-coordinate) vector for the framework configuration.

The defining feature of rigidity in $SE(2)$ is the specification of formations that maintain the \emph{relative bearing} angle between points in the framework with respect to the \emph{local frame} of each point.  This is motivated by scenarios where a robot in a multi-robot team is able to measure the relative bearing between itself and other robots.  The explicit use of \emph{directed graphs} in the definition of $SE(2)$ frameworks reinforces this motivation when considering that relative bearing sensors are likely to be attached to the body frame of the robots, and will have certain constraints such as field-of-view restrictions that may exclude certain measurements, and in particular, bidirectional or symmetric measurements. 

In this venue, we assume that a point $\chi(v) \in SE(2)$ has a bearing measurement of the point $\chi(u)$ if and only if the directed edge $(v,u)$ belongs to the graph $\mc{G}$ (i.e., $(v,u) \in \mc{E}$); this measurement is denoted $\beta_{vu} \in {\mc{S}^1}$.  The relative bearing is measured from the body coordinate system of that point.  

We now define the \emph{directed bearing rigidity function} associated with the $SE(2)$ framework, $b_{\mc{G}} : SE(2)^{|\mc{V}|} \rightarrow {\mc{S}^1}^{|\mc{E}|}$, as
\bea \label{bearing_rigidity_fcn}
b_{\mc{G}}(\chi(\mc{V})) = \leftm{ccc} \beta_{e_1} & \cdots & \beta_{e_{|\mc{E}|}} \rightm^T;
\eea
we use the notation $e_i \in \mc{E}$ to represent a directed edge in the graph and assume a labeling of the edges in $\mc{G}$.

The bearing measurement can be equivalently written as a unit vector pointing from the body coordinate of the point $\chi(v)$ to the point $\chi(u)$, i.e., 
\bea \label{unit_bearing_vector}
r_{vu}(p,\psi) = \leftm{c}r_{vu}^{x} \\ r_{vu}^{y}\rightm= \leftm{c} \cos(\beta_{vu})\\ \sin(\beta_{vu})\rightm, 
\eea
which also satisfies the relationship 
$$ \beta_{vu} = \mbox{atan}\left(\frac{r_{vu}^y}{r_{vu}^x}\right).$$
Observe, therefore, that the bearing measurement can be expressed directly in terms of the relative positions and attitudes of the points expressed in the world frame,
\beas 
r_{vu} (p,\psi)&=& \leftm{cc} \cos(\psi(v)) & \sin(\psi(v)) \\ -\sin(\psi(v)) & \cos(\psi(v)) \rightm  \frac{(p(u)-p(v))}{\|p(v)-p(u)\|}\\
& =&\R({\psi}(v))^T\frac{(p(u)-p(v))}{\|p(v)-p(u)\|} =\R({\psi}(v))^T \overline{p}_{vu},
\eeas
where the matrix $\R(\psi(v))$ is a rotation matrix from the world frame to the body frame of agent $v$, and $\overline{p}_{vu}$ is a shorthand notation for describing the normalized relative position vector from $v$ to $u$. {See Figure \ref{fig:SE2} for an illustration.}

\begin{figure}[!t]
\begin{center}
  \includegraphics[width=.80\columnwidth]{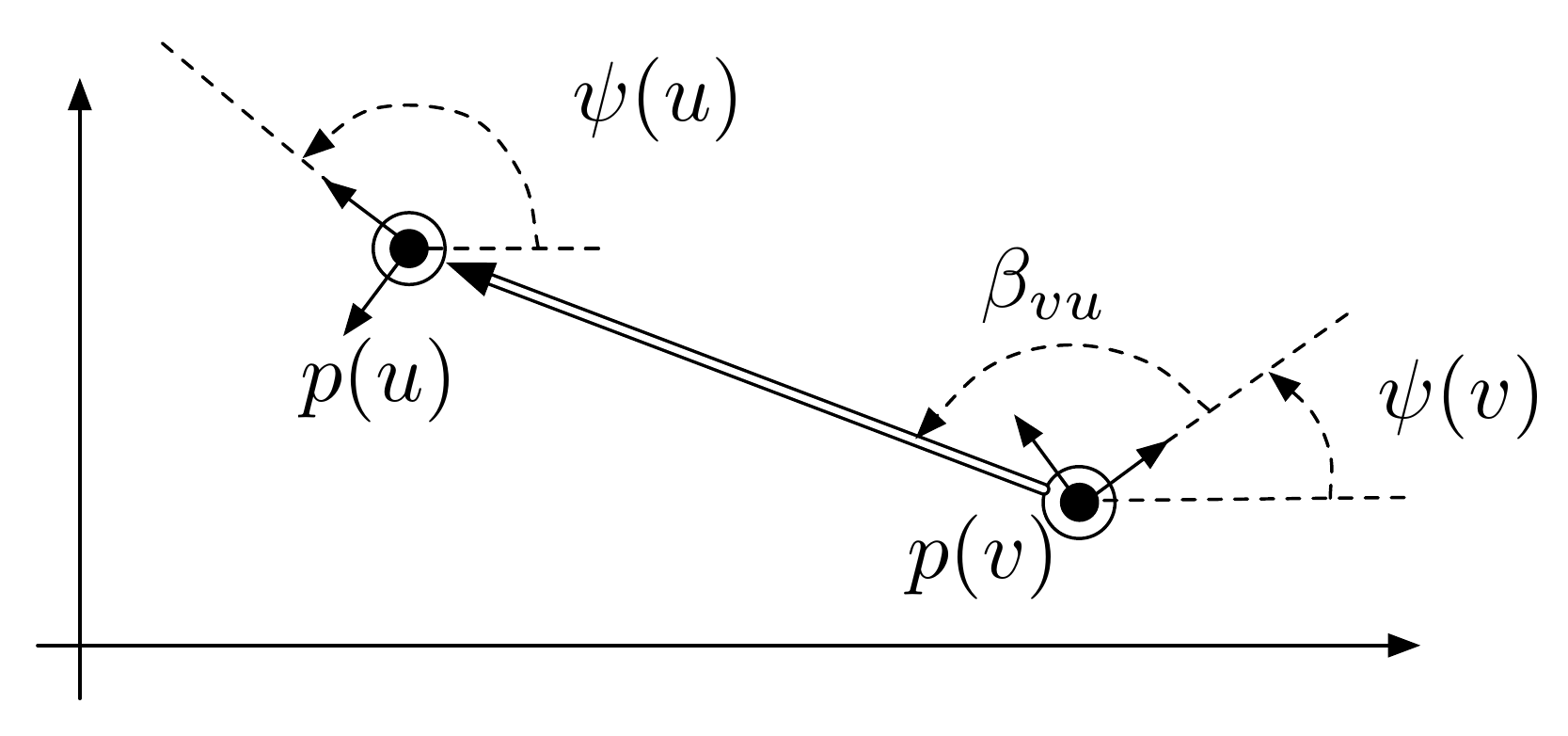}
  \caption{An $SE(2)$ framework with relative bearing measurement from point $\chi(v)$ to point $\chi(u)$.}\label{fig:SE2}
\end{center}
\vspace{-20pt}
\end{figure}

We now introduce formal definitions for rigidity in $SE(2)$, and for the notions of equivalent and congruent formations in $SE(2)$ frameworks.

\begin{definition}[{\small Rigidity in $SE(2)$}]\label{def:se2_rigid}
Let $\mc{G}=(\mc{V},\mc{E})$ be a directed graph and $K_{|\mc{V}|}$ be the complete directed graph on $|\mc{V}|$ nodes.  The $SE(2)$ framework $(\mc{G},p,\psi)$ is \emph{rigid} in $SE(2)$ if there exists a neighborhood $S$ of $\chi(\mc{V}) \in SE(2)^{|\mc{V}|}$ such that 
$$b_{K_{|\mc{V}|}}^{-1}(b_{K_{|\mc{V}|}}(\chi(\mc{V}))) \cap S = b_{\mc{G}}^{-1}(b_{\mc{G}}(\chi(\mc{V}))) \cap S,$$
where $b_{K_{|\mc{V}|}}^{-1}(b_{K_{|\mc{V}|}}(\chi(\mc{V}))) \subset SE(2)$ denotes the pre-image of the point $b_{K_{|\mc{V}|}}(\chi(\mc{V}))$ under the directed bearing rigidity map.

The $SE(2)$ framework $(\mc{G},p,\psi)$ is \emph{roto-flexible} in $SE(2)$ if there exists an analytic path $\eta : [0, \, 1]  \rightarrow SE(2)^{|\mc{V}|}$ such that $\eta(0) = \chi(\mc{V})$ and 
$$\eta(t) \in  b_{\mc{G}}^{-1}(b_{\mc{G}}(\chi(\mc{V}))) - b_{K_{|\mc{V}|}}^{-1}(b_{K_{|\mc{V}|}}(\chi(\mc{V}))) $$
for all $t \in (0, \, 1]$.
\end{definition}

{This definition states that an $SE(2)$ framework $(\mc{G},p,\psi)$ is rigid if and only if for any point $q \in SE(2)$ sufficiently close to $\chi(\mc{V})$ with $b_{\mc{G}}(\chi(\mc{V})) = b_{\mc{G}}(q)$, that there exists a local bearing preserving map of $SE(2)$ taking $\chi(\mc{V})$ to $q$.}  The term \emph{roto-flexible} is used to emphasize that an analytic path in $SE(2)$ can consist of motions in the plane in addition to angular rotations about the body axis of each point.  

 \begin{definition}[{\small Equivalent and Congruent $SE(2)$ Frameworks}]
Frameworks $(\mc{G},p,\psi)$ and $(\mc{G},q,\phi)$ are \emph{bearing equivalent} if 
\bea\label{bearing_eqv} 
\R({\psi}(u))^T\overline{p}_{uv}  =\R({\phi}(u))^T\overline{q}_{uv} ,
\eea
for all $(u,v) \in \mc{E}$ and are \emph{bearing congruent} if 
\beas\label{bearing_cong} 
\R({\psi}(u))^T\overline{p}_{uv} &=& \R({\phi}(u))^T\overline{q}_{uv} \mbox{ and } \\
 \R({\psi}(v))^T\overline{p}_{vu} &=& \R({\phi}(v))^T\overline{q}_{vu},
\eeas
for all $u,v \in \mc{V}$.
\end{definition}

\begin{definition}[{\small Global rigidity of $SE(2)$ Frameworks}]
 A framework $(\mc{G},p,\psi)$ is \emph{globally rigid} in $SE(2)$ if every framework which is bearing equivalent to $(\mc{G},p,\psi)$ is also bearing congruent to $(\mc{G},p,\psi)$.
\end{definition}

It is now worth mentioning a few key distinctions between global rigidity in $SE(2)$ with parallel rigidity in $\reals^2$.  First, parallel rigidity is built on frameworks where the underlying graph is \emph{undirected}.  Rigidity in $SE(2)$, however, is explicitly defined for directed graphs.  
As an example, consider the framework in $SE(2)$ shown in Figures \ref{fig:SE2_rigid1} and \ref{fig:SE2_rigid2}.  Both frameworks are parallel rigid in $\reals^2$ since the internal angles are the same for all agent pairs.  These frameworks, however, are not globally rigid in $SE(2)$.  It can be verified that the two frameworks are equivalent in $SE(2)$ since agent 3 does not actually have any bearing measurements to maintain (the directed graph contains no edges from agent 3 to other agents).  Consequently, agent 3 is free to rotate about its axis without affecting the bearing measurements from the other agents, as shown in Figure \ref{fig:SE2_rigid2}, showing that the frameworks are not congruent.  Observe that adding another directed edge from agent 3 to either agent 1 or 2 will constrain the attitude of agent 3 and the framework will become globally rigid in $SE(2)$.

Motivated by the above example, we now define a corresponding notion of infinitesimal rigidity for $SE(2)$ frameworks.  Using the language introduced in Definition \ref{def:se2_rigid}, we consider a smooth motion along the path $\eta$ with $\eta(0) = \chi(\mc{V})$ such that the initial rate of change of the directed bearing rigidity function is zero.  All such paths satisfying this property are the infinitesimal motions of the $SE(2)$ framework, and are characterized by the null-space of the Jacobian of the directed bearing rigidity function, $\nabla_{\chi} b_{\mc{G}}(\chi(\mc{V}))$, as can be seen by examining the first-order Taylor series expansion of the directed bearing rigidity function,
$$b_{\mc{G}}(\chi(\mc{V}) + \delta \chi) = b_{\mc{G}}(\chi(\mc{V})) + \left(\nabla_{\chi} b_{\mc{G}}(\chi(\mc{V})) \right)\delta \chi + h.o.t.\, , $$
with $\chi(\mc{V}) + \delta \chi$ a point along the path defined by $\eta$.

In this venue, we introduce the \emph{directed bearing rigidity matrix}, $\mc{B}_{\mc{G}}(\chi(\mc{V}))$ as the Jacobian of the directed bearing rigidity function,
\bea\label{directed_bearing_rigidity_matrix}
\mc{B}_{\mc{G}}(\chi(\mc{V})) := \nabla_{\chi} b_{\mc{G}}(\chi(\mc{V})) \in \reals^{|\mc{E}| \times 3 |\mc{V}|}.
\eea
If a path $\eta$ is contained entirely in $ b_{K_{|\mc{V}|}}^{-1}(b_{K_{|\mc{V}|}}(\chi(\mc{V})))$ for all $t \in [0, 1]$, then the infinitesimal motions are entirely described by the tangent space to $b_{K_{|\mc{V}|}}^{-1}(b_{K_{|\mc{V}|}}(\chi(\mc{V})))$, that we denote by $T_p$.  Furthermore, the space $T_p$ must therefore be a subspace of the kernel of the directed bearing rigidity matrix for any other graph $\mc{G}$, i.e. $T_p \subseteq \mc{N} \left[ \mc{B}_{\mc{G}}(\chi(\mc{V})) \right]$; this follows from the definition of roto-flexible frameworks given in Definition \ref{def:se2_rigid}.  This leads us to a formal definition for infinitesimal rigidity of frameworks in $SE(2)$.

\begin{definition}[{\small Infinitesimal Rigidity in $SE(2)$}]\label{def:inf_rigid_se2}
An $SE(2)$ framework $(\mc{G},p,\psi)$ is \emph{infinitesimally rigid} if $  \mc{N} \left[ \mc{B}_{\mc{G}}(\chi(\mc{V})) \right]=\mc{N} \left[ \mc{B}_{K_{|\mc{V}|}}(\chi(\mc{V})) \right]$.  Otherwise, it is \emph{infinitesimally roto-flexible} in $SE(2)$.
\end{definition}

Definition \ref{def:inf_rigid_se2} leads to the main result of this section which relates the infinitesimal rigidity of an $SE(2)$ framework to the rank of the directed bearing rigidity matrix.

\begin{thm}\label{thm:bearing_rigidity}
An $SE(2)$ framework is infinitesimally rigid if and only if 
$$ \rk[\mc{B}_{\mc{G}}(\chi(\mc{V}))] = 3|\mc{V}|-4.$$
\end{thm}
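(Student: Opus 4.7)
The plan is to reduce Theorem \ref{thm:bearing_rigidity} to the rank identity $\rk[\mc{B}_{K_{|\mc{V}|}}(\chi(\mc{V}))] = 3|\mc{V}|-4$ and then to establish that identity by computing the kernel of $\mc{B}_{K_{|\mc{V}|}}$ explicitly. For the reduction step, every row of $\mc{B}_{\mc{G}}$ is also a row of $\mc{B}_{K_{|\mc{V}|}}$, so $\mc{N}[\mc{B}_{K_{|\mc{V}|}}]\subseteq\mc{N}[\mc{B}_{\mc{G}}]$ automatically. By Definition \ref{def:inf_rigid_se2}, $(\mc{G},p,\psi)$ is infinitesimally rigid precisely when this inclusion is an equality, and since both matrices have $3|\mc{V}|$ columns, equality of null spaces is equivalent to equality of ranks. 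Thus the theorem follows from the universal claim that the complete-graph bearing rigidity matrix always has rank $3|\mc{V}|-4$, i.e.\ that $\mc{N}[\mc{B}_{K_{|\mc{V}|}}]$ is four-dimensional.

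For the kernel computation I would exploit the decomposition $\beta_{vu} = \alpha_{vu}-\psi(v)\pmod{2\pi}$, where $\alpha_{vu}$ denotes the world-frame bearing of $u$ from $v$. Linearising, the edge-$(v,u)$ constraint $\mc{B}_{K_{|\mc{V}|}}\delta\chi = 0$ becomes $\delta\alpha_{vu} = \delta\psi(v)$. Holding $v$ fixed and varying $u$ forces $\delta\alpha_{vu}$ to be independent of $u$; swapping endpoints and using $\delta\alpha_{uv} = \delta\alpha_{vu}$ (since $\alpha_{uv} = \alpha_{vu}+\pi$) then forces $\delta\psi(u) = \delta\psi(v)$ for every pair, so $\delta\psi = \omega\,\ones$ for a single scalar $\omega\in\reals$. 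Subtracting $\omega\cdot(I_{|\mc{V}|}\otimes J)\chi_p$ from $\delta p$, with $J$ the $2\times 2$ skew-symmetric matrix generating $90^\circ$ rotation, yields a residual position perturbation whose induced $\delta\alpha_{vu}$ vanishes on every ordered pair---exactly the kernel condition for the parallel rigidity matrix $R_{\|,K_{|\mc{V}|}}(\chi_p)$ of $\S$\ref{sec:parallel_rigidity}. By the theorem of that section, this residual lives in a three-dimensional space spanned by the two translations and the dilation, and together with the rotational mode parameterised by $\omega$ this accounts for exactly four kernel dimensions.

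To close the argument I would exhibit the four candidate trivial motions in $\reals^{3|\mc{V}|}$ explicitly---the two translations $(\ones_{|\mc{V}|}\otimes e_k,\zeros)$ for $k=1,2$, the uniform dilation $(\chi_p,\zeros)$, and the coordinated rotation $((I_{|\mc{V}|}\otimes J)\chi_p,\ones)$---and check by direct substitution that each annihilates $\mc{B}_{\mc{G}}$ for every $\mc{G}$, so that they occupy the common four-dimensional kernel guaranteed above. The main obstacle I foresee is the tacit non-degeneracy requirement on $\chi_p$: the four candidate motions remain linearly independent, and the parallel rigidity theorem delivers a full three-dimensional residual kernel, only when the positions $p(v_i)$ are distinct and not all collinear. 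This mild condition must either be stated as a hypothesis of Theorem \ref{thm:bearing_rigidity} or shown to be automatic whenever the bearing measurements are well defined.
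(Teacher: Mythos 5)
Your proposal is correct and reaches the same structural conclusion as the paper (kernel of the complete-graph matrix $=$ translations $\oplus$ dilation $\oplus$ one coordinated rotation, hence dimension four), but the route differs in how that key fact is established. The paper proceeds through Proposition \ref{prop:bearing_cstr_alt} (rewriting the kernel condition as $R_{\|,\mc{G}}\delta\chi_p=-R_\psi\delta\chi_\psi$), introduces the coordinated rotation subspace as an intersection of images, proves $\dim\mathcal{R}_{\circlearrowright}(K_{|\mc{V}|})=1$ by an orthogonality argument reduced to $K_2$, and packages everything in Corollary \ref{cor:inf_rigid_SE2} before assembling the theorem. You bypass that machinery: you first observe that $\mc{N}[\mc{B}_{K_{|\mc{V}|}}]\subseteq\mc{N}[\mc{B}_{\mc{G}}]$ (rows of $\mc{B}_{\mc{G}}$ are rows of $\mc{B}_{K_{|\mc{V}|}}$), so by rank--nullity the definition of infinitesimal rigidity is equivalent to a rank equality, reducing everything to $\rk[\mc{B}_{K_{|\mc{V}|}}]=3|\mc{V}|-4$; you then compute that kernel directly from the decomposition $\beta_{vu}=\alpha_{vu}-\psi(v)$, using the edge-reversal symmetry of world-frame bearings to force $\delta\psi=\omega\ones$, subtracting the coordinated rotation, and landing on the complete-graph parallel rigidity kernel. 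This is arguably more transparent than the paper's $K_2$-reduction in Proposition \ref{prop:complete_coordrot_subspace}, and it also makes explicit the null-space inclusion that the paper's proof uses only tacitly; what the paper's route buys in exchange is Corollary \ref{cor:inf_rigid_SE2}, a rigidity test for a general directed graph $\mc{G}$ in terms of $R_{\|,\mc{G}}$ and $\mathcal{R}_{\circlearrowright}(\mc{G})$, which has independent interest. Your closing caveat is well taken: the claim that the complete-graph parallel rigidity kernel is exactly three-dimensional (and hence $\rk[\mc{B}_{K_{|\mc{V}|}}]=3|\mc{V}|-4$) fails for collinear configurations, so a non-degeneracy hypothesis is genuinely needed; the paper's own argument carries the same implicit assumption (it invokes the $2|\mc{V}|-3$ rank for the complete graph through Corollary \ref{cor:inf_rigid_SE2}), so making it explicit is an improvement rather than a deviation.
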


\begin{figure}[!t]
\begin{center}
	\subfigure[Framework $(\mc{G},p,\psi)$ in $SE(2)$.]{\includegraphics[width=.47\columnwidth]{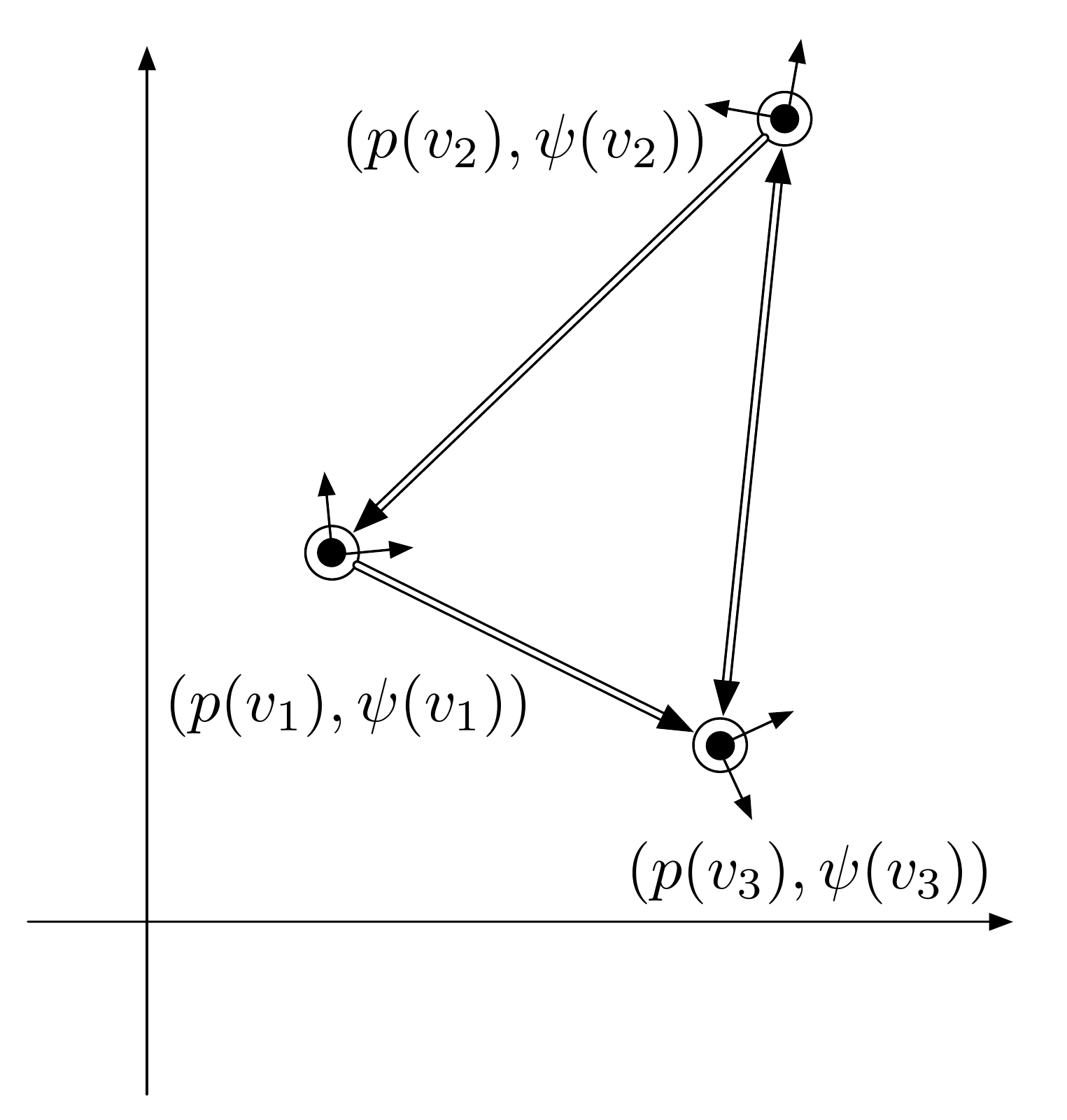}\label{fig:SE2_rigid1}}\hfill
	\subfigure[Framework $(\mc{G},q,\phi)$ in $SE(2)$.]{\includegraphics[width=0.47\columnwidth]{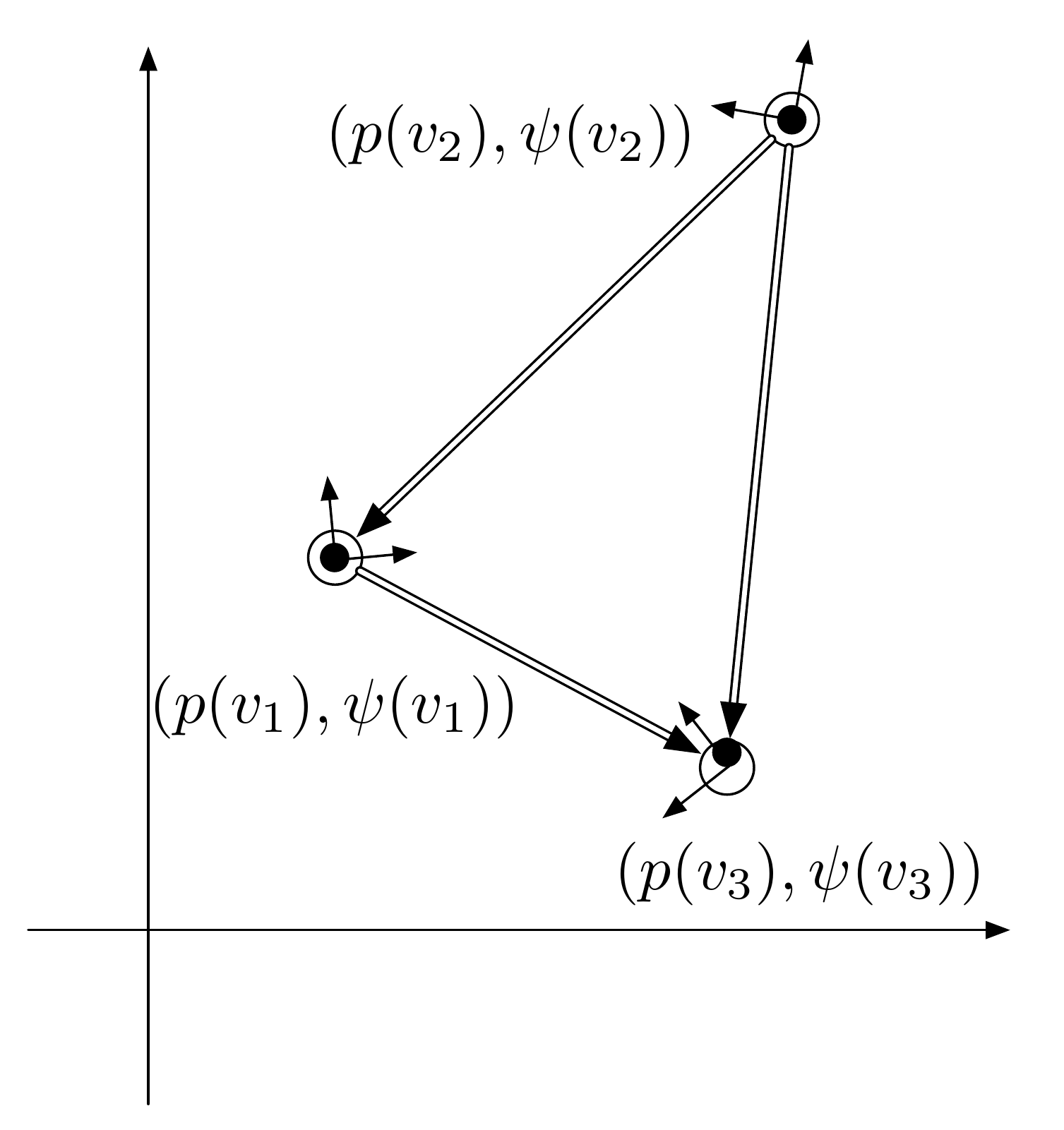}\label{fig:SE2_rigid2}}\hfill
	  \caption{Example of two frameworks that are equivalent but not congruent in $SE(2)$.  If these frameworks are embedded only in $\reals^2$ (i.e. neglecting orientation and as undirected graphs) then they are globally parallel rigid.} \label{fig:rigid_graphs_SE2}
\end{center}
\vspace{-20pt}
\end{figure}

Before proceeding with the proof of Theorem \ref{thm:bearing_rigidity}, we first examine certain structural properties of $\mc{N} \left[ \mc{B}_{\mc{G}}(\chi(\mc{V})) \right]$. First, we observe that the infinitesimal motions of an $SE(2)$ framework are composed of motions in $\reals^2$ with motions in $\mc{S}^1$ for each point.  For an infinitesimal motion $\delta \chi \in   \mc{N} \left[ \mc{B}_{\mc{G}}(\chi(\mc{V})) \right]$, let $\delta \chi_p$ denote the velocity component of $\delta \chi$ in $\reals^{2|\mc{V}|}$ and $\delta \chi_{\psi}$ be the angular velocity component in ${\reals}^{|\mc{V}|}$.

\begin{prop}\label{prop:bearing_cstr_alt}
Every infinitesimal motion $\delta \chi \in \mc{N} \left[ \mc{B}_{\mc{G}}(\chi(\mc{V})) \right]$ satisfies
\bea\label{bearing_rigidity_alt}
R_{\|,\mc{G}}(\chi_{p}) \delta \chi_p = - R_\psi(\chi_{p})  \delta \chi_{\psi}
\eea
\noindent where $R_{\|,\mc{G}}(\chi_{p})$ is the parallel rigidity matrix defined in (\ref{par_infrigid_mtx}) and $R_\psi(\chi_{p})  = D_{\mc{G}}(\chi_{p}) \overline{E}^T(\mc{G})$ with  $D_{\mc{G}}(\chi_{p})  = \diag{\{\ell_{e_1}^2, \cdots, \ell_{e_{|\mc{E}|}}^2\}}$ a diagonal matrix containing the distances squared between all pairs of nodes defined by the edge-set $\mc{E}$, and the matrix $\overline{E} \in \reals^{|\mc{V}| \times |\mc{E}|}$ is defined as 
$$[\overline{E}(\mc{G})]_{ik} = \left\{  \begin{array}{cc} 1, & \mbox{if } e_k=(v_i,v_j) \in \mc{E} \\
								     0, & \mbox{o.w.} \end{array}   \right. .$$
\end{prop}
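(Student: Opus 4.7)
The plan is to compute the Jacobian of $b_{\mc{G}}$ edge-by-edge and then identify each block with the stated matrices. Since $\mc{B}_{\mc{G}}$ decomposes naturally into a position block and an attitude block, the kernel equation $\mc{B}_{\mc{G}}\delta\chi = 0$ becomes $(\nabla_{\chi_p} b_{\mc{G}})\,\delta\chi_p = -(\nabla_{\chi_\psi} b_{\mc{G}})\,\delta\chi_\psi$, and it suffices to show that, after a nonzero row rescaling, the position block coincides with $R_{\|,\mc{G}}$ and the attitude block coincides with $R_\psi$.

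For a single directed edge $e = (v,u) \in \mc{E}$, I would first decompose the scalar bearing as $\beta_{vu} = \theta_{vu} - \psi(v)$, where $\theta_{vu}$ is the world-frame angle of the vector $p(u) - p(v)$. This decomposition follows directly from the expression $r_{vu} = T(\psi(v))^T\overline{p}_{vu}$ given in the preliminaries: composing a world-to-body rotation by $\psi(v)$ with a unit vector of world-frame angle $\theta_{vu}$ produces a body-frame unit vector of angle $\theta_{vu} - \psi(v)$. Differentiating, the attitude partials contribute only a $-d\psi(v)$ term, while the standard $\arctan$ derivative gives
\begin{equation*}
d\theta_{vu} \;=\; \frac{\bigl((p(u)-p(v))^\perp\bigr)^T \bigl(dp(u)-dp(v)\bigr)}{\|p(u)-p(v)\|^2}.
\end{equation*}
Multiplying the $e$-th row of $\mc{B}_{\mc{G}}\delta\chi = 0$ by the nonzero scalar $\ell_e^2 = \|p(u)-p(v)\|^2$ (which preserves the kernel) yields
\begin{equation*}
\bigl((p(u)-p(v))^\perp\bigr)^T \bigl(\delta\chi_p(u) - \delta\chi_p(v)\bigr) \;-\; \ell_e^2\,\delta\chi_\psi(v) \;=\; 0.
\end{equation*}
The first term matches the $e$-th entry of $R_{\|,\mc{G}}(\chi_p)\delta\chi_p$ once the head/tail sign convention for extending $R_{\|,\mc{G}}$ from the undirected to the directed setting is fixed consistently with the statement. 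The second term is precisely $[D_{\mc{G}}\overline{E}^T \delta\chi_\psi]_e$: by definition, $\overline{E}^T$ reads off $\delta\chi_\psi(v)$, the attitude of the head of $e$, and $D_{\mc{G}}$ multiplies that entry by $\ell_e^2$. Stacking these identities vertically over all edges produces the claimed matrix relation.

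The main obstacle is purely bookkeeping: carefully tracking the sign conventions for $R_{\|,\mc{G}}$ and $\overline{E}$ so that the signs in the directed setting line up correctly, and keeping the edge-indexed diagonal rescaling $D_{\mc{G}}$ consistent with the adopted enumeration of $\mc{E}$. There is no deep mathematical content beyond this: the argument reduces to one application of the chain rule per edge, followed by vertical stacking into matrix form.
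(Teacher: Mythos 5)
Your proposal is correct and is essentially the paper's own argument: the paper's proof consists of the single assertion that (\ref{bearing_rigidity_alt}) follows from evaluating the Jacobian of the directed bearing rigidity function, and you actually carry out that evaluation edge by edge via $\beta_{vu}=\theta_{vu}-\psi(v)$ and the derivative of $\mathrm{atan2}$. One caveat on the bookkeeping you defer: the sign cannot be repaired by a head/tail convention, because $((p_i-p_j)^\perp)^T(\delta p_i-\delta p_j)$ is invariant under swapping $i$ and $j$; with the paper's literal conventions your per-edge identity gives $R_{\|,\mc{G}}(\chi_p)\,\delta\chi_p=+R_\psi(\chi_p)\,\delta\chi_\psi$, and recovering the minus sign in (\ref{bearing_rigidity_alt}) requires an overall sign choice in the rows of $R_{\|,\mc{G}}$ (or of $R_\psi$), which is immaterial since these matrices enter all subsequent results only through null spaces, images, and ranks.
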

\begin{proof}
The result in (\ref{bearing_rigidity_alt}) is obtained directly from the evaluation of the Jacobian of the directed bearing rigidity function.
\end{proof}

\begin{rem}
The parallel rigidity matrix as shown in (\ref{bearing_rigidity_alt}) is actually slightly different then what was presented in (\ref{par_infrigid_mtx}).  The main difference is that (\ref{bearing_rigidity_alt}) explicitly considers directed graphs.  Therefore, a bidirectional edge will result in two identical rows in(\ref{bearing_rigidity_alt}), whereas in (\ref{par_infrigid_mtx}) it is treated as a single edge. 
\end{rem}
The first observation from Proposition \ref{prop:bearing_cstr_alt} is the relationship between the infinitesimal motions of an $SE(2)$ framework and those of a parallel rigid framework.  Indeed, if all agents maintain their attitude, i.e. when $\delta \chi_{\psi}=0$, then the constraint reduces to the constraints for parallel rigidity.  The corresponding infinitesimal motions are then the translations and dilations of the framework.  

If the angular velocities of the agents are non-zero, then the infinitesimal motions of the framework correspond to what we term the \emph{coordinated rotations} of the framework.  A coordinated rotation consists of an angular rotation of each agent about its own body axis with a rigid-body rotation of the framework in $\reals^2$.
The coordinated rotations that satisfy (\ref{bearing_rigidity_alt}) are thus related to the subspace
$$ \mathcal{R}_{{\scriptsize \circlearrowright}}( \mc{G})=\mbox{IM}\left\{ R_{\|,\mc{G}}(\chi_p) \right\} \cap \mbox{IM} \left\{-R_{\psi} (\chi_p) \right\} \subset \reals^{|\mc{E}|},$$
that we term the \emph{coordinated rotation subspace}.
Formally, the coordinated rotations can be constructed as
$$\hat{\delta \chi}_p \in R_{\|,\mc{G}}^{-1}[\mathcal{R}_{{\scriptsize \circlearrowright}}( \mc{G})], \mbox{and } \hat{\delta \chi}_{\psi} = -R_{\psi}^{\dagger}(\chi_p)R_{\|,\mc{G}}(\chi_p)\hat{\delta \chi}_p,$$
where by $(A)^{-1}[W]$ we mean the pre-image of the set $W$ under the mapping $A$, and $M^{\dagger}$ is the left-generalized inverse of the matrix $M$.\footnote{ That is, $M^{\dagger}$ satisfies $MM^{\dagger}M=M$.  If $M$ has full rank, then $M^{\dagger}$ is the pseudo-inverse of $M$.}

\begin{prop}\label{prop:cordrot_notempty}
The coordinated rotation subspace is non-trivial.  Equivalently, $\dim\mathcal{R}_{{\scriptsize \circlearrowright}}( \mc{G}) \geq 1$.
\end{prop}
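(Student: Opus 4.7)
The plan is to construct an explicit non-zero element of $\mathcal{R}_{\circlearrowright}(\mc{G})$, namely the vector of squared edge lengths
\[
d \;:=\; D_{\mc{G}}(\chi_p)\,\mathbf{1}_{|\mc{E}|} \;=\; \bigl[\ell_{e_1}^2,\ldots,\ell_{e_{|\mc{E}|}}^2\bigr]^T \in \reals^{|\mc{E}|},
\]
which is non-zero whenever at least one edge joins distinct points (as is implicit in any non-degenerate framework). The motivation comes from the geometric discussion preceding the statement: an infinitesimal \emph{coordinated rotation}, i.e., a rigid-body rotation of the entire framework in $\reals^2$ together with a matching uniform spin of each body about its own axis, preserves every bearing measurement in every local frame, so by Proposition \ref{prop:bearing_cstr_alt} the common image vector produced by this motion through $R_{\|,\mc{G}}$ and through $-R_\psi$ must lie in the intersection of their images.

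Concretely, I would first check that $d \in \mbox{IM}\{R_{\|,\mc{G}}(\chi_p)\}$ by evaluating $R_{\|,\mc{G}}(\chi_p)$ on the stacked infinitesimal rigid-body rotation $\delta p(v_i) = p(v_i)^\perp$. Using linearity of $\perp$ and the identity $\|x^\perp\|=\|x\|$, the row associated to the directed edge $e_k = (v_i,v_j)$ collapses to
\[
((p(v_i)-p(v_j))^\perp)^T\bigl(p(v_i)^\perp - p(v_j)^\perp\bigr) \;=\; \|(p(v_i)-p(v_j))^\perp\|^2 \;=\; \ell_{e_k}^2,
\]
so stacking over all edges gives $R_{\|,\mc{G}}(\chi_p)\,\delta\chi_p = d$. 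Second, I would check $d \in \mbox{IM}\{-R_\psi(\chi_p)\}$ by taking $\delta\chi_\psi = -\mathbf{1}_{|\mc{V}|}$: since each column of $\overline{E}(\mc{G})$ has exactly one nonzero entry (a $1$ at the head of the corresponding directed edge), we have $\overline{E}^T\mathbf{1}_{|\mc{V}|} = \mathbf{1}_{|\mc{E}|}$, and thus $-R_\psi(\chi_p)\,\delta\chi_\psi = D_{\mc{G}}(\chi_p)\,\mathbf{1}_{|\mc{E}|} = d$. Combining the two bullets, $d$ is a non-zero element of $\mathcal{R}_{\circlearrowright}(\mc{G})$, which is the desired conclusion.

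I do not expect serious obstacles: the calculation is routine once the correct ansatz is chosen, and the correct ansatz is essentially forced by the narrative surrounding the proposition, which has already singled out the coordinated rotation as the ``new'' trivial motion of $SE(2)$ frameworks beyond those inherited from parallel rigidity. A side benefit of writing the argument in this form is that it explicitly identifies the image of a coordinated rotation as the squared-length vector $d$, a fact that should be useful when later arguments need to count dimensions inside $\mc{N}[\mc{B}_{\mc{G}}(\chi(\mc{V}))]$ in order to establish Theorem \ref{thm:bearing_rigidity}.
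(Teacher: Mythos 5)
Your proposal is correct and follows essentially the same route as the paper: both exhibit the squared-edge-length vector $D_{\mc{G}}(\chi_p)\mathbf{1}_{|\mc{E}|}$ as a common element of $\mbox{IM}\{R_{\|,\mc{G}}(\chi_p)\}$ (via a rigid-body rotation of the positions) and of $\mbox{IM}\{-R_\psi(\chi_p)\}$ (via the all-ones angular velocity, using $\overline{E}^T\mathbf{1}_{|\mc{V}|}=\mathbf{1}_{|\mc{E}|}$). The only differences are cosmetic: you carry out the row-by-row verification that the paper dismisses as ``straightforward but tedious,'' and you use the counterclockwise rotation $p^\perp$ where the paper uses its transpose, a sign that is immaterial since only the image subspaces matter.
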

\begin{proof}
We prove this by explicitly constructing a vector in the coordinated rotation subspace.  Consider a rigid-body rotation of the framework in $\reals^2$ described by 
$$z_p = \left(I_{|\mc{V}|} \otimes \leftm{cc}0 & 1 \\ -1 & 0 \rightm \right) \chi_p.$$
It is a straight-forward (although tedious) exercise to verify that $R_{\|,\mc{G}}(\chi_p)z_p = D_{\mc{G}}(\chi_p)\ones_{|\mc{E}|} $.  Furthermore, from the construction of $\overline{E}$ it follows that $\overline{E}^T\ones_{|\mc{V}|}=\ones_{|\mc{E}(\mc{G})|}$ and therefore $R_{\|,\mc{G}}(\chi_p)z_p = D_{\mc{G}}(\chi_p)\overline{E}^T(\mc{G})\ones_{|\mc{V}|}$ concluding the proof.
\end{proof}
The proof of Proposition \ref{prop:cordrot_notempty} formally describes how a coordinated rotation can be constructed for any $SE(2)$ framework.  Each point in the framework should rotate about its own axis at the same rate as the rigid-body rotation of the formation.  This can be considered the $SE(2)$ extension of the infinitesimal motions associated with distance rigidity.  Proposition \ref{prop:cordrot_notempty} can now be used to make a stronger statement about the coordinated rotation subspace for the complete graph.

\begin{prop}\label{prop:complete_coordrot_subspace}
For the complete directed graph $K_{|\mc{V}|}$,  $\dim  \mathcal{R}_{{\scriptsize \circlearrowright}}(K_{|\mc{V}|}) = 1$.
\end{prop}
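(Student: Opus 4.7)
The plan is to establish the bound in two pieces, combining the already-proved lower bound $\dim\mathcal{R}_{{\scriptsize \circlearrowright}}(K_{|\mc{V}|}) \geq 1$ from Proposition \ref{prop:cordrot_notempty} with a matching upper bound. For the upper bound, I would exploit the symmetry of $K_{|\mc{V}|}$: for every ordered pair $(v_i,v_j)$ with $i\neq j$, both $(v_i,v_j)$ and $(v_j,v_i)$ belong to $\mc{E}$, which couples the two membership conditions defining $\mathcal{R}_{{\scriptsize \circlearrowright}}$.

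Concretely, take any $w \in \mathcal{R}_{{\scriptsize \circlearrowright}}(K_{|\mc{V}|})$, so that there exist $\delta\chi_p \in \reals^{2|\mc{V}|}$ and $\delta\chi_\psi \in \reals^{|\mc{V}|}$ with $w = R_{\|,K_{|\mc{V}|}}(\chi_p)\delta\chi_p = -R_{\psi}(\chi_p)\delta\chi_\psi$. From the structure of $R_{\psi}(\chi_p) = D_{K_{|\mc{V}|}}(\chi_p)\overline{E}^T(K_{|\mc{V}|})$, the entry of $w$ corresponding to the edge $(v_i,v_j)$ reads $w_{ij} = -\ell_{ij}^2\,[\delta\chi_\psi]_i$, since each column of $\overline{E}^T$ selects the head of its edge. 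Meanwhile, from the parallel rigidity matrix, the same entry is $w_{ij} = ((p(v_i)-p(v_j))^\perp)^T(\delta\chi_p^i - \delta\chi_p^j)$, which is \emph{symmetric} under swapping $i \leftrightarrow j$ (both differences flip sign). Writing the corresponding identity for the reversed edge $(v_j,v_i)$ — which exists precisely because the graph is complete — and using $\ell_{ij}^2 = \ell_{ji}^2$ yields $[\delta\chi_\psi]_i = [\delta\chi_\psi]_j$ for every pair $i,j$.

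Thus $\delta\chi_\psi = c\ones_{|\mc{V}|}$ for some scalar $c$, and therefore
\begin{equation*}
w \;=\; -R_{\psi}(\chi_p)\,c\,\ones_{|\mc{V}|} \;=\; -c\,D_{K_{|\mc{V}|}}(\chi_p)\,\overline{E}^T(K_{|\mc{V}|})\,\ones_{|\mc{V}|} \;=\; -c\,D_{K_{|\mc{V}|}}(\chi_p)\,\ones_{|\mc{E}|},
\end{equation*}
where the last equality reuses the identity $\overline{E}^T\ones_{|\mc{V}|}=\ones_{|\mc{E}|}$ derived in the proof of Proposition \ref{prop:cordrot_notempty}. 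Every element of $\mathcal{R}_{{\scriptsize \circlearrowright}}(K_{|\mc{V}|})$ is hence a scalar multiple of the single vector $D_{K_{|\mc{V}|}}(\chi_p)\ones_{|\mc{E}|}$, giving $\dim\mathcal{R}_{{\scriptsize \circlearrowright}}(K_{|\mc{V}|}) \leq 1$, and combined with Proposition \ref{prop:cordrot_notempty} the dimension is exactly $1$.

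The only subtle point is recognizing that for the complete \emph{directed} graph the two edges $(v_i,v_j)$ and $(v_j,v_i)$ contribute two distinct rows to $R_{\psi}$ (keyed to different heads) but identical rows to $R_{\|,K_{|\mc{V}|}}$; this asymmetry between the two matrices on reversed edges is precisely what collapses $\delta\chi_\psi$ to a constant vector. Everything else is a short computation using the explicit descriptions of $R_{\|,\mc{G}}$ and $R_{\psi}$ already provided.
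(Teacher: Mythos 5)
Your proof is correct, and it rests on the same core mechanism as the paper's: for a pair of mutually-sensing nodes, the two reversed edges force the corresponding angular velocities to agree, because the two rows of $R_{\|,K_{|\mc{V}|}}$ coincide while the two rows of $R_{\psi}$ select different heads. The paper executes this as an informal contradiction argument: it assumes $\dim \mathcal{R}_{{\scriptsize \circlearrowright}}(K_{|\mc{V}|}) > 1$, reasons about what angular-velocity pattern a second, orthogonal coordinated rotation would have to exhibit (non-uniform same-direction or opposite-direction rotations), and then asserts that checking $K_2$, where $\overline{E}(K_2) = I_2$, rules this out via (\ref{bearing_rigidity_alt}). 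Your version is a direct derivation rather than a reduction-by-contradiction: from the two representations $w = R_{\|,K_{|\mc{V}|}}(\chi_p)\delta\chi_p = -R_{\psi}(\chi_p)\delta\chi_\psi$ you extract $[\delta\chi_\psi]_i = [\delta\chi_\psi]_j$ for every pair (using $\ell_{ij} = \ell_{ji} > 0$), conclude $\delta\chi_\psi = c\ones_{|\mc{V}|}$, and explicitly exhibit the spanning vector $D_{K_{|\mc{V}|}}(\chi_p)\ones_{|\mc{E}|}$. What this buys is rigor and completeness: the paper's orthogonality step and its leap from ``check $K_2$'' to the full graph are left implicit, whereas your pairwise argument applies verbatim to every reversed-edge pair of $K_{|\mc{V}|}$ and identifies the one-dimensional subspace concretely, with the lower bound imported from Proposition \ref{prop:cordrot_notempty} exactly as the paper does. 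No gaps; the only implicit assumption, that all inter-agent distances are nonzero, is already required for the bearing measurements to be defined.
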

\begin{proof}
The proof of Proposition \ref{prop:cordrot_notempty} constructs one vector in the coordinated rotation subspace.  Assume that $\dim  \mathcal{R}_{{\scriptsize \circlearrowright}}(K_{|\mc{V}|})  > 1$.  Then there must exist at least one other coordinated rotation that is orthogonal to the one constructed in Proposition \ref{prop:cordrot_notempty} and contains a non-trivial angular rotation of points in the framework. Note that in Proposition \ref{prop:cordrot_notempty} each agent was assigned a unit angular velocity in the same (counter-clockwise) direction.  Thus, any other choice for angular velocities must either be described by each point rotating  in the same direction, but non-uniform velocities, or at least two points rotating in opposite directions.

Considering this observation, it is sufficient to see if such a motion can be constructed for the graph $K_2$.  In this situation, $\overline{E}(K_2) = I_2$ and one can directly conclude from (\ref{bearing_rigidity_alt}) that there can be no additional coordinated rotation then the one described.
\end{proof}

\begin{cor}\label{cor:inf_rigid_SE2}
An $SE(2)$ framework is infinitesimally rigid in $SE(2)$ if and only if 
\begin{enumerate}
\item $\rk[R_{\|,\mc{G}}(\chi_p)] = 2|\mc{V}| - 3$ and
\item $\dim\{\mc{R}_{\circlearrowright}(\mc{G})\} = 1$.
\end{enumerate}
\end{cor}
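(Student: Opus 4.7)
The plan is to deduce the corollary as a direct rank computation from Theorem \ref{thm:bearing_rigidity}, using the block structure of $\mc{B}_\mc{G}$ exposed by Proposition \ref{prop:bearing_cstr_alt}. The starting observation is that $\mc{B}_\mc{G}(\chi(\mc{V}))\delta\chi = R_{\|,\mc{G}}(\chi_p)\delta\chi_p + R_\psi(\chi_p)\delta\chi_\psi$, so (after grouping the partial derivatives with respect to $\chi_p$ and $\chi_\psi$) the directed bearing rigidity matrix is the horizontal block concatenation $\mc{B}_\mc{G} = [R_{\|,\mc{G}} \;\; R_\psi]$. Its column space is then exactly the subspace sum $\mbox{IM}[R_{\|,\mc{G}}] + \mbox{IM}[R_\psi]$, and applying the standard identity $\dim(U+V) = \dim U + \dim V - \dim(U \cap V)$ together with the definition $\mc{R}_{\circlearrowright}(\mc{G}) = \mbox{IM}[R_{\|,\mc{G}}] \cap \mbox{IM}[R_\psi]$ yields
\bea
\rk[\mc{B}_\mc{G}] = \rk[R_{\|,\mc{G}}] + \rk[R_\psi] - \dim \mc{R}_{\circlearrowright}(\mc{G}). \nonumber
\eea

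Next, I would evaluate $\rk[R_\psi]$ combinatorially. From Proposition \ref{prop:bearing_cstr_alt}, $R_\psi = D_\mc{G}\,\overline{E}^T$ with $D_\mc{G}$ a full-rank diagonal matrix of squared edge lengths, so $\rk[R_\psi] = \rk[\overline{E}]$. Because each edge has a unique head, distinct rows of $\overline{E}$ have disjoint supports, hence the nonzero rows are trivially independent and $\rk[\overline{E}]$ equals the number of vertices of out-degree at least one. Under the standing (and implicit) assumption that every vertex originates at least one bearing measurement, this gives $\rk[R_\psi] = |\mc{V}|$ and the identity reduces to
\bea
\rk[\mc{B}_\mc{G}] = \rk[R_{\|,\mc{G}}] + |\mc{V}| - \dim \mc{R}_{\circlearrowright}(\mc{G}). \nonumber
\eea

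To finish, Theorem \ref{thm:bearing_rigidity} identifies infinitesimal rigidity in $SE(2)$ with $\rk[\mc{B}_\mc{G}] = 3|\mc{V}|-4$, so after substitution the condition becomes $\rk[R_{\|,\mc{G}}] - \dim \mc{R}_{\circlearrowright}(\mc{G}) = 2|\mc{V}|-4$. Two universal bounds now force both corollary conditions at once: the parallel-rigidity upper bound $\rk[R_{\|,\mc{G}}] \leq 2|\mc{V}|-3$ recalled in Section \ref{sec:parallel_rigidity}, and the lower bound $\dim \mc{R}_{\circlearrowright}(\mc{G}) \geq 1$ established in Proposition \ref{prop:cordrot_notempty}. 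Their difference therefore satisfies $\rk[R_{\|,\mc{G}}] - \dim \mc{R}_{\circlearrowright}(\mc{G}) \leq 2|\mc{V}|-4$ always, with equality if and only if both bounds are simultaneously tight, i.e., exactly conditions (1) and (2). The main delicacy is the block-rank identity in the first step and correctly identifying the intersection of the column images as $\mc{R}_{\circlearrowright}(\mc{G})$; once this bookkeeping is in place, the corollary follows directly from the rank bounds already assembled in Section \ref{sec:parallel_rigidity} and Proposition \ref{prop:cordrot_notempty}.
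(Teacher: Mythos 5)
Your rank identity is correct and is actually sharper than anything the paper writes down: up to an invertible diagonal scaling, $\mc{B}_{\mc{G}}$ is the block matrix $\leftm{cc} D_{\mc{G}}^{-1}R_{\|,\mc{G}} & \overline{E}^T\rightm$, so indeed $\rk[\mc{B}_{\mc{G}}] = \rk[R_{\|,\mc{G}}] + \rk[R_{\psi}] - \dim\mc{R}_{\circlearrowright}(\mc{G})$, and your ``only if'' direction closes cleanly because tightness of the chain of bounds forces all three quantities to their extremes (in particular it yields $\rk[R_\psi]=|\mc{V}|$ for free). The first genuine problem is your ``if'' direction: the step $\rk[R_{\psi}]=|\mc{V}|$ is equivalent to $d_{out}(v)\geq 1$ for every $v$, and this is \emph{not} a standing assumption of the paper --- the proposition at the end of $\S$\ref{sec:SE2_rigidity} \emph{derives} it as a consequence of infinitesimal rigidity, which would be vacuous if it were assumed. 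Moreover it does not follow from conditions (1)--(2). Concretely, take the triangle of Figure \ref{fig:rigid_graphs_SE2} with edge set $\{(1,2),(2,1),(1,3),(2,3)\}$ and, say, $p_1=(0,0)$, $p_2=(1,0)$, $p_3=(0.5,1)$: the two rows from the bidirectional edge coincide, so $\rk[R_{\|,\mc{G}}]=3=2|\mc{V}|-3$, and the image of $R_\psi$ (two-dimensional, since vertex $3$ has no outgoing edge) meets the image of $R_{\|,\mc{G}}$ exactly in the line spanned by $D_{\mc{G}}\ones_{|\mc{E}|}$, so $\dim\mc{R}_{\circlearrowright}(\mc{G})=1$. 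Both conditions hold, yet vertex $3$ spins freely, $\rk[\mc{B}_{\mc{G}}]=3|\mc{V}|-5$, and the framework is infinitesimally roto-flexible. So as written the right-to-left implication does not close; you must either carry the out-degree hypothesis explicitly (which amounts to observing that the corollary needs it) or show it follows from (1)--(2), which it does not.

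The second problem is structural: your argument starts from Theorem \ref{thm:bearing_rigidity}, but in the paper the implication ``infinitesimally rigid $\Rightarrow \rk[\mc{B}_{\mc{G}}]=3|\mc{V}|-4$'' of that theorem is itself proved by citing this very corollary; the paper's own proof of the corollary is a direct appeal to Definition \ref{def:inf_rigid_se2} and Propositions \ref{prop:bearing_cstr_alt} and \ref{prop:cordrot_notempty}, not to the theorem. Spliced into the paper, your route is therefore circular. The repair is cheap with the machinery you already assembled: apply your rank identity to $K_{|\mc{V}|}$ (the complete graph is parallel rigid, every vertex there has positive out-degree, and Proposition \ref{prop:complete_coordrot_subspace} gives $\dim\mc{R}_{\circlearrowright}(K_{|\mc{V}|})=1$) to conclude $\rk[\mc{B}_{K_{|\mc{V}|}}]=3|\mc{V}|-4$; since the rows of $\mc{B}_{\mc{G}}$ are a subset of those of $\mc{B}_{K_{|\mc{V}|}}$, we have $\mc{N}[\mc{B}_{K_{|\mc{V}|}}]\subseteq\mc{N}[\mc{B}_{\mc{G}}]$, and Definition \ref{def:inf_rigid_se2} then states infinitesimal rigidity precisely as $\rk[\mc{B}_{\mc{G}}]=3|\mc{V}|-4$, after which your bookkeeping (with the out-degree condition made explicit) takes over.
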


\begin{proof}
The statement follows directly from Definition \ref{def:inf_rigid_se2}, Proposition \ref{prop:bearing_cstr_alt} and Proposition \ref{prop:cordrot_notempty}.
\end{proof}

We are now ready to prove Theorem \ref{thm:bearing_rigidity}.

\begin{proof}[Proof of Theorem \ref{thm:bearing_rigidity}]
Assume that $ \rk[\mc{B}_{\mc{G}}(\chi(\mc{V}))] = 3|\mc{V}|-4$.  From Propositions \ref{prop:bearing_cstr_alt} and  \ref{prop:complete_coordrot_subspace} we conclude that $ \rk[\mc{B}_{K_{|\mc{V}|}}(\chi(\mc{V}))] = 3|\mc{V}|-4$.  By definition \ref{def:inf_rigid_se2}, we conclude that the $SE(2)$ framework $(\mc{G},p,\psi)$ is infinitesimally rigid.  Assume now that the $SE(2)$ framework is infinitesimally rigid.  By corollary \ref{cor:inf_rigid_SE2}, we conclude $\rk[R_{\|,\mc{G}}(p(\mc{V})] = 2|\mc{V}| - 3$ and $\dim\{\mc{R}_{\circlearrowright \,\mc{G}}\} = 1$.  Therefore, $ \rk[\mc{B}_{\mc{G}}(\chi(\mc{V}))] = 3|\mc{V}|-4$.
\end{proof}

While the general structure of the coordinated rotation subspace can be difficult to characterize for arbitrary graphs, it does lead to a necessary condition on the underlying graph of the framework for infinitesimal rigidity.

\begin{prop}
If an $SE(2)$ framework is infinitesimally rigid, then $d_{out}(v) \geq 1$ for all $v \in \mc{V}$.
\end{prop}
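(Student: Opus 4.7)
The plan is to prove the contrapositive: assume some vertex $v \in \mc{V}$ has $d_{out}(v)=0$ and exhibit an infinitesimal motion lying in $\mc{N}[\mc{B}_{\mc{G}}(\chi(\mc{V}))]$ but not in $\mc{N}[\mc{B}_{K_{|\mc{V}|}}(\chi(\mc{V}))]$, contradicting Definition \ref{def:inf_rigid_se2}.

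First I would examine exactly where the attitude coordinate $\psi(v)$ enters the directed bearing rigidity function $b_{\mc{G}}$. From the expression $r_{vu}(p,\psi) = \R(\psi(v))^T \overline{p}_{vu}$, the angle $\psi(v)$ appears only in those entries of $b_{\mc{G}}$ indexed by edges with $v$ as their \emph{head}, i.e., edges of the form $(v,u) \in \mc{E}$. If $d_{out}(v) = \mc{N}_v = \emptyset$, then $\psi(v)$ does not appear in $b_{\mc{G}}$ at all, and consequently the column of $\mc{B}_{\mc{G}}(\chi(\mc{V}))$ corresponding to $\psi(v)$ is identically zero.

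Next I construct the candidate infinitesimal motion $\delta \chi$ with $\delta \chi_p = \zeros$ and $\delta \chi_{\psi}$ equal to the standard basis vector $e_v \in \reals^{|\mc{V}|}$ that rotates only node $v$. By the previous observation, $\mc{B}_{\mc{G}}(\chi(\mc{V})) \delta \chi = \zeros$, so $\delta \chi \in \mc{N}[\mc{B}_{\mc{G}}(\chi(\mc{V}))]$. To complete the contradiction, I must verify that $\delta \chi \notin \mc{N}[\mc{B}_{K_{|\mc{V}|}}(\chi(\mc{V}))]$. In the complete directed graph, $v$ has outgoing edges $(v,u)$ for every $u \neq v$; differentiating $\R(\psi(v))^T \overline{p}_{vu}$ with respect to $\psi(v)$ produces a nonzero entry in each such row of $\mc{B}_{K_{|\mc{V}|}}(\chi(\mc{V}))$ (this is the routine calculation I would not grind through), so $\mc{B}_{K_{|\mc{V}|}}(\chi(\mc{V})) \delta \chi \neq \zeros$.

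Combining these two facts yields $\mc{N}[\mc{B}_{\mc{G}}(\chi(\mc{V}))] \not\subseteq \mc{N}[\mc{B}_{K_{|\mc{V}|}}(\chi(\mc{V}))]$, which by Definition \ref{def:inf_rigid_se2} contradicts the assumed infinitesimal rigidity of $(\mc{G},p,\psi)$. The argument is essentially bookkeeping about which variables appear in which rows of $b_{\mc{G}}$; the only nontrivial step is the verification that the $\psi(v)$ column of $\mc{B}_{K_{|\mc{V}|}}$ is nonzero, which I expect to be immediate once the Jacobian is written out, so I do not anticipate any real obstacle.
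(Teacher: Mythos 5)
Your proof is correct and follows essentially the same route as the paper: construct the motion $\delta\chi_p = \zeros$, $\delta\chi_\psi = e_v$, observe it lies in $\mc{N}[\mc{B}_{\mc{G}}(\chi(\mc{V}))]$ because $\psi(v)$ enters no measurement when $d_{out}(v)=0$, and conclude non-rigidity via Definition \ref{def:inf_rigid_se2}. The only difference is that you explicitly verify this motion is not in $\mc{N}[\mc{B}_{K_{|\mc{V}|}}(\chi(\mc{V}))]$ (the derivative of each outgoing bearing with respect to $\psi(v)$ is nonzero), a step the paper merely asserts by saying the motion does not belong to $T_p$.
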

\begin{proof}
Assume that there exists a node $v \in \mc{V}$ such that $d_{out}(v) = 0$.  Then a solution to (\ref{bearing_rigidity_alt}) is $\delta \chi_p = {\bf 0}$ and $[\delta \chi_{\psi}]_i = 1$ if $i$ corresponds to node $v$ and 0 otherwise.  This motion does not belong to the subspace $T_p$ and therefore $\rk[\mc{B}_{\mc{G}}(\chi(\mc{V}))] >3|\mc{V}|-4$ and the framework is not infinitesimally rigid.
\end{proof}

\section{Estimation of Relative Positions}\label{sec:estimator}

Achieving high-level objectives such as formations for multi-robot systems require that all robots have knowledge of a common reference frame.  This is to ensure that their velocity inputs vectors are all consistent when maneuvering to achieve the common formation task.  However, often the sensed data that is available, such as a relative bearing measurement, is measured from the local body frame of each agent.  Furthermore, agents do not have access to a global coordinate system.  A requirement for multi-robot systems, therefore, is the ability to \emph{estimate} a common reference frame in order to express to relative position information.  This section describes how the results from $\S$\ref{sec:SE2_rigidity} can be used to distributedley estimate a common reference frame from only the relative bearing measurements.

In this direction, we consider an infinitesimally rigid $SE(2)$ framework $(\calG,p,\psi)$.  We assume that there are two points in the framework whose Euclidean distance \AFmod{is unknown but positive and constant}{is a known, but arbitrary constant (without loss of generality assumed to be 1)}; these points are indexed as $\iota$ and $\kappa$ (i.e., the position of agent $\iota$ is $p(\iota)$).
Denote with $\hat{\xi}_{\iota i}\in \reals^2$  the estimate of the quantity
\bea\label{eq:scale_free_pos}
\xi_{\iota i}=\R(\psi(\iota))^T \frac{p(i)-p(\iota)}{\|p(\iota)-p(\kappa)\|}
\eea
i.e., the relative position (expressed in the body frame of agent $\iota$) of a virtual point that is on the line connecting agent $\iota$ and a generic agent $i$ and whose distance from $\iota$ is $\frac{\|p(i)-p(\iota)\|}{\|p(\iota)-p(\kappa)\|}$. 

Denote then with $\hat\vartheta_i\in {\mc{S}^1}$ the estimate of the angle $\vartheta(i)$ defined by 
\bea\label{angle_relation}
\R(\vartheta(i))=\R(\psi(i))^T\R(\psi(\iota)),\label{eq:thetadef}
\eea
whose role will be clear in the following.
Define then the following quantities: 
\begin{equation}\label{estimates}
\hat{\xi}_{ij}=\hat{\xi}_{\iota j}-\hat{\xi}_{\iota i},\;
\; \hat r_{ij}=\R(\hat\vartheta_i) \frac{\hat{\xi}_{ij}}{\|\hat{\xi}_{ij}\|},\;
\hat\beta_{ij} = {\rm atan2}(\hat r_{ij}^y,\hat r_{ij}^x).
\end{equation}
Thus the quantity $\hat{\xi}_{ij}$ is an estimate of the relative position vector from $i$ to $j$, scaled by the quantity $\|p(\iota)-p(\kappa)\|$, and expressed in a common reference frame whose origin is $p(\iota)$ and orientation is $\psi(\iota)$. \AFmod{Notice that $\hat{\xi}_{ij}$ represents an unscaled estimate (in the sense explained in the Introduction) of the actual relative position between the agents.}{} Similarly, the estimate of the attitude of the point $i$ can be obtained from (\ref{angle_relation}).

The important fact is that if $\hat\vartheta(i)=\vartheta(i)$ and $\hat{\xi}_{\iota i}$ is equal to~\eqref{eq:scale_free_pos} we  obtain (using also~\eqref{eq:thetadef}) that
\beas
 \hat r_{ij} &=&  \R(\vartheta(i)) \R(\psi(\iota))^T \frac{p(i)-p(j)}{\|p(i)-p(j)\|} \\
 &=& \R(\psi(i))^T \frac{p(i)-p(j)}{\|p(i)-p(j)\|} =  r_{ij} 
\eeas
this justifies the fact that $ \hat r_{ij}$ and $\hat\beta_{ij}$ represent our estimates of $r_{ij}(p,\psi)$, and $\beta_{ij}$, respectively, as defined in (\ref{unit_bearing_vector}).

Our goal can be then recast as the design of an estimator that is able to compute $\hat{\xi}_{\iota i}$ and $\hat\vartheta(i)$ for all $i=1\ldots |\calV|$ using the bearing measurements that corresponds to each directed edge of $\calE$. In order to do so we consider the following estimation error: 
\begin{equation}\label{eq:error_bearing}
e(\hat{\xi},\hat\vartheta,p,\psi) = b_{\mc{G}}(\chi(\mc{V})) - \hat{b}_{\mc{G}}(\hat{\xi},\hat{\vartheta})
\end{equation}
where $\hat{b}_{\mc{G}}(\hat{\xi},\hat{\vartheta}) \in \reals^{|\mc{E}|}$ is the vector of estimated relative bearings obtained from (\ref{estimates}).

The objective of the estimation algorithm can be then stated as the minimization of the following scalar function
\bea\label{eq:est_objective}
J(e) &\hspace{-7pt}=&\hspace{-7pt} \frac{1}{2}\left(k_e\|e(\hat{\xi},\hat\vartheta,p,\psi)\|^2 + k_1\|\hat{\xi}_{\iota \iota}\|^2 + k_2(\|\hat{\xi}_{\iota \kappa}\|^2-1)^2 + \right. \nonumber \\
&&\left. k_3(1-\cos\hat\vartheta(\iota))\right),
\eea
where the nonnegative terms $k_1\|\hat{\xi}_{\iota \iota}\|^2$, $k_2(\|\hat{\xi}_{\iota \kappa}\|^2-1)^2$ and $k_3 (1-\cos\vartheta(\iota))$ account for the fact that at steady state the estimator should let $\hat{\xi}_{\iota \iota}$ converge to $0$, $\|\hat{\xi}_{\iota \kappa}\|$ converge to $1$, and $\hat\vartheta(\iota)$ converge to $0$. The positive gains $k_e,k_1,k_2$, and $k_3$ are introduced here to tune the priority of the single error components within the overall error. 

Minimization of~\eqref{eq:est_objective} can be achieved by following the antigradient of $J(e)$, i.e., by choosing:
\bea\label{eq:estimator}
\begin{pmatrix}
\dot {\hat{\xi}} \\
\dot{\hat\vartheta}
\end{pmatrix}
=
-k_e\left(\nabla_{(\hat{\xi},\hat\vartheta)} e\right)^Te
-
\begin{pmatrix}
\vdots\\
k_1 \hat{\xi}_{\iota \iota}\\
\vdots\\
k_2(\hat{\xi}_{\iota \kappa}^T\hat{\xi}_{\iota \kappa} - 1)\hat{\xi}_{\iota \kappa}\\
\vdots\\
k_3 \sin\hat\vartheta(\iota)\\
\vdots\\
\end{pmatrix}
\eea
where the terms $k_1\hat{\xi}_{\iota \iota}$, $k_2(\hat{\xi}_{\iota \kappa}^T\hat{\xi}_{\iota \kappa} - 1)\hat{\xi}_{\iota \kappa}$, and $k_3\sin\hat\vartheta(\iota) $ appear at the $\iota$-th and $\kappa$-th entry pairs of $\dot{\hat{\xi}}$ and $\iota$-th entry of $\dot{\hat\vartheta}$, respectively, and all the other terms are zero.

As a matter of fact, considering that $b_{\mc{G}}(\chi(\mc{V}))$ is constant, the Jacobian of $e(\hat{\xi},\hat\vartheta,p,\psi)$ can be expressed in terms of the directed bearing rigidity matrix as
\bea\label{grad_error}
\nabla_{(\hat{\xi},\hat\vartheta)} e = -\leftm{cc} D_{\mc{G}}^{-1}(\hat{\xi})R_{\|,\mc{G}}(\hat{\xi}) & \overline{E}(\mc{G})^T\rightm .
\eea
Note that the form above is consistent with (\ref{bearing_rigidity_alt}), which can be obtained from the directed bearing rigidity matrix using an appropriate permutation matrix.

\begin{prop}
If the framework  $(\calG,p,\psi)$ is (infinitesimally) rigid in $SE(2)$ then the vector of true values 
{\tiny
$$\left[ \R(\psi(\iota)) \frac{p(1)-p(\iota)}{\|p(\iota)-p(\kappa)\|}^T \, \cdots \, \R(\psi(\iota)) \frac{p({|\calV|})-p(\iota)}{\|p(\iota)-p(\kappa)\|}^T \, \vartheta(1)\ldots\vartheta({|\calV|})\right]^T$$}
is an isolated local minimizer of $e$. Therefore, there exists an $\epsilon > 0$ such that, for all initial conditions $(\hat{\xi}_0^T, \, \hat\vartheta_0)^T$ whose distance from the true values is less than $\epsilon$, the estimation $\hat{\xi}$ and $\hat\vartheta$ converge  to the true values. 
\end{prop}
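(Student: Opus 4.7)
The plan is to show that the vector of true values is a strict (i.e., isolated) local minimizer of $J(e)$ in~\eqref{eq:est_objective}, and then invoke a standard Lyapunov argument for the gradient flow~\eqref{eq:estimator} to conclude local convergence. This splits into three pieces: verifying that $J$ attains value zero at the true values; proving that the Hessian of $J$ is positive definite at that point; and closing the argument with a compact-sublevel-set / LaSalle step.

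First I would check optimality. At the true values one has $\hat{\xi}_{ij}=\xi_{ij}$ and $\hat\vartheta(i)=\vartheta(i)$ for every pair $(i,j)$, so $\hat{b}_{\mc{G}}=b_{\mc{G}}$ and hence $e=0$. Simultaneously $\hat{\xi}_{\iota\iota}=\xi_{\iota\iota}=0$, $\|\hat{\xi}_{\iota\kappa}\|=1$ by the normalization chosen in~\eqref{eq:scale_free_pos}, and $\hat\vartheta(\iota)=0$ by~\eqref{eq:thetadef} with $i=\iota$. All four nonnegative summands of $J$ therefore vanish, so $J=0$ at the true values and this is a global minimum of $J$.

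The main step is the Hessian analysis. Writing the estimator state as $z=(\hat{\xi},\hat\vartheta)\in\reals^{3|\mc{V}|}$, at the minimum $z^\star$ one has
\[
\nabla^2 J(z^\star) \;=\; k_e\, J_e^T J_e \;+\; H_1 + H_2 + H_3,
\]
where $J_e=\nabla_z e(z^\star)$ and the $H_k$ are the Hessians of the three penalty terms; the cross term $(\nabla^2 e)^T e$ drops out because $e(z^\star)=0$. From~\eqref{grad_error} and the permutation identifying $J_e$ with $\mc{B}_{\mc{G}}(\chi(\mc{V}))$, $\mc{N}[J_e]$ coincides with $\mc{N}[\mc{B}_{\mc{G}}]$, which by Theorem~\ref{thm:bearing_rigidity} is exactly four-dimensional under the infinitesimal rigidity hypothesis. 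A direct inspection of~\eqref{estimates} shows that this kernel is spanned by the four trivial motions of the estimator: a common translation $\delta\hat{\xi}_{\iota i}=v_0\in\reals^2$ for all $i$, a radial rescaling $\delta\hat{\xi}_{\iota i}=\alpha\,\hat{\xi}_{\iota i}$, and a coordinated rotation $\delta\hat{\xi}_{\iota i}=\omega(\hat{\xi}_{\iota i})^\perp$ with $\delta\hat\vartheta(i)=-\omega$. On this subspace, $H_1$ contributes $k_1\|v_0\|^2$ through the $\hat{\xi}_{\iota\iota}$ block, $H_2$ contributes a positive multiple of $(v_0^T\hat{\xi}_{\iota\kappa}+\alpha\|\hat{\xi}_{\iota\kappa}\|^2)^2$, and $H_3$ contributes a positive multiple of $\omega^2$. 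Simultaneous vanishing of the three forces $v_0=0$, $\alpha=0$, $\omega=0$, so $H_1+H_2+H_3$ is positive definite on $\mc{N}[J_e]$; together with the positivity of $k_e J_e^T J_e$ off $\mc{N}[J_e]$, this yields $\nabla^2 J(z^\star)\succ 0$, and hence $z^\star$ is an isolated local minimizer.

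With positive-definiteness established, the final step is routine. Because~\eqref{eq:estimator} is exactly $\dot z=-\nabla J(z)$, we have $\dot J=-\|\nabla J\|^2\le 0$, and $\nabla^2 J(z^\star)\succ 0$ supplies an $\epsilon>0$ and a compact sublevel set of $J$ around $z^\star$ on which $z^\star$ is the unique critical point. LaSalle's invariance principle on this sublevel set then yields local convergence of $(\hat{\xi},\hat\vartheta)$ to the true values for every initial condition within distance $\epsilon$. The main obstacle I foresee is the identification of $\mc{N}[J_e]$ with the span of the four trivial motions in the \emph{estimator} coordinates rather than in the original $SE(2)$ coordinates used in~\eqref{bearing_rigidity_alt}; carefully tracking the permutation of variables alluded to just after~\eqref{grad_error} is what makes Theorem~\ref{thm:bearing_rigidity} directly applicable here.
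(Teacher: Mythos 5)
Your proof is correct, but it runs at a different level than the paper's own argument. The paper reasons about function values: it asserts that, by infinitesimal rigidity, in a small neighborhood the only configurations with zero bearing error are trivial motions (translations, dilations, coordinated rotations) applied to the true values, and that any nonzero trivial motion makes one of the anchor terms $k_1\|\hat{\xi}_{\iota\iota}\|^2$, $k_2(\|\hat{\xi}_{\iota\kappa}\|^2-1)^2$, $k_3(1-\cos\hat\vartheta(\iota))$ positive; it then simply asserts local convexity and convergence of the gradient flow. You instead work at second order: you show $\nabla^2 J$ at the true values equals $k_e J_e^T J_e$ plus the anchor Hessians, use Theorem \ref{thm:bearing_rigidity} to identify $\mc{N}[J_e]$ with the four-dimensional space of trivial motions in estimator coordinates (translation $v_0$, dilation $\alpha$, coordinated rotation $\omega$ with $\delta\hat\vartheta(i)=-\omega$, whose sign you got right given the paper's counterclockwise $\perp$ convention), and check that the anchor Hessians are positive definite on that kernel, forcing $v_0=\alpha=\omega=0$; since all summands are positive semidefinite at the minimizer, this gives $\nabla^2 J \succ 0$ and hence a strict, isolated minimizer, after which the Lyapunov/LaSalle step is routine. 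The conceptual skeleton is the same in both proofs (rigidity removes all non-trivial directions, the three anchors remove the four trivial degrees of freedom), but your Hessian route actually proves the "local convexity" the paper only asserts, and it needs only the rank statement of Theorem \ref{thm:bearing_rigidity}, whereas the paper's value-level claim that the local zero set of $e$ consists exactly of trivially-displaced copies of the true values tacitly requires converting infinitesimal rigidity into a statement about the fiber of the bearing map, a step it does not justify. Two small points to tighten: state explicitly that each anchor Hessian is positive semidefinite at the minimizer (otherwise "PD on the kernel plus PD off the kernel" does not by itself give a PD sum), and note that $J_e$ is evaluated at the estimator's true configuration $(\xi,\vartheta)$, a translated, rotated and scaled copy of $(p,\psi)$, so the rank $3|\mc{V}|-4$ carries over because such similarity transformations preserve the rank of the directed bearing rigidity matrix.
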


\begin{proof}
If the framework is infinitesimally rigid in $SE(2)$, then in any sufficiently small neighborhood of the true bearing values, the only configurations that result $\|e(\hat{\xi},\hat\vartheta,p,\psi)\|^2$ being zero in~\eqref{eq:est_objective} are the trivial motions of the true values (i.e. the rigid-body translations, dilations, and coordinated rotations). For the true values the remaining terms of~\eqref{eq:est_objective} are zero and therefore is $J(e)=0$. If any non-zero trivial motion is applied to the true values then at least one of the remaining terms in $J(e)$ becomes positive. This means that the true values is an isolated local minimizer of~\eqref{eq:est_objective} and that the $J(e)$ is locally convex around the true values. Therefore gradient descent is enough to converge to the true values if the initial error is sufficiently small.   
\end{proof}

\section{Simulation Example}\label{sec:sims}

In this section we report two simulation case studies meant to illustrate the relative position estimator of Sect.~\ref{sec:estimator}. Both simulations involved a total of $|\calV|=6$ agents; the directed sensing graphs are shown in Figs.~\ref{fig:sims_plots}(a,e). By a proper choice of the initial conditions $p(t_0),\,\psi(t_0)$, this purposely resulted in an infinitesimally rigid framework $(\calG_1,\,p(t_0),\,\psi(t_0))$ and a roto-flexible framework $(\calG_2,\,p(t_0),\,\psi(t_0))$.  The following gains were employed: $k_e=5$, $k_1=k_2=k_3=100$. The initial conditions $\hat\xi(t_0)$ and $\hat\vartheta(t_0)$ for the estimator~(\ref{eq:estimator}) were taken as their real values plus a (small enough) random perturbation. 

Figures~\ref{fig:sims_plots}(b--d,f--h) reports the results for the two cases, with the plots in top row (Figs.~\ref{fig:sims_plots}(b--d)) corresponding to the infinitesimal setup, and the plots in the bottom row (Figs.~\ref{fig:sims_plots}(f--h)) to the roto-flexible setup. Let us first consider case~I: Fig.~\ref{fig:sims_plots}(b) shows the behavior of $e(t)$, the error vector between the measured and estimated bearing angles as defined in~(\ref{eq:error_bearing}). We note that under the action of the estimator~(\ref{eq:estimator}), all the $|\calE|$ components of $e(t)$ converge to zero as expected owing to the infinitesimal rigidity of the considered framework. Next, Fig.~\ref{fig:sims_plots}(c) reports the behavior of $e_p(t)=\sum^{|\calV|}_{i=1}\|\xi_{\iota i}-\hat \xi_{\iota i}(t)\|$, i.e., the cumulative error in estimating the unscaled positions $\xi_{\iota i}$ (as defined in~(\ref{eq:scale_free_pos})) for all the $|\calV|$ agents. As expected, $e_p(t)$ converges to $0$ as well (demonstrating again the \emph{rigidity} of the framework). Finally, Fig.~\ref{fig:sims_plots}(d) shows the trajectories of $\hat \xi_{\iota i}(t)$ and $\hat \psi_i(t)$ on the plane (with $\hat \psi_i(t)$ obtained from~(\ref{angle_relation}) when evaluated upon the estimated $\hat\vartheta_i$): here, the real (and constant) poses $(p,\,\psi)$ are indicated by square symbols and thick green arrows, while the initial $\hat \xi_{\iota i}(t_0)$ and $\hat \psi_i(t_0)$ are represented by small circles and dashed black arrows. We can thus note how the estimated position and orientation of every agent converges towards its real value. These results are of course very different for case~II as clear from Figs.~\ref{fig:sims_plots}(f--h) because of the non-rigidity of the employed framework in this case.

\begin{figure*}[!h]
\begin{center}
	\subfigure[The directed graph used in the first simulation associated with an $SE(2)$ infinitesimally rigid framework.]{\quad\quad\quad\quad\includegraphics[width=.5\columnwidth]{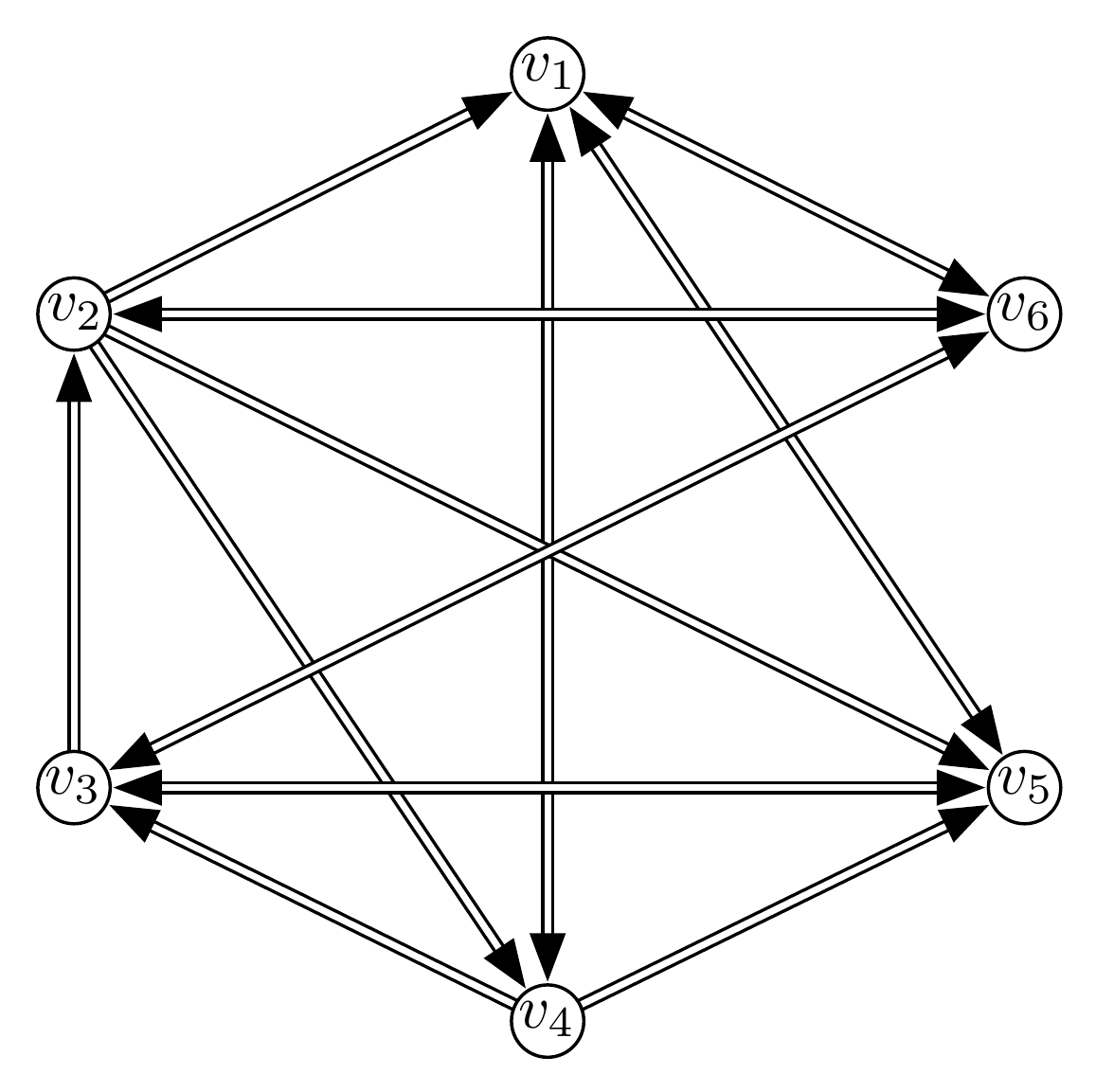}\label{fig:sims1}\quad\quad\quad\quad}\; 
	\subfigure[]\eA \; \subfigure[]\epA  \; \subfigure[]\trajA\\
\subfigure[The directed graph used in the second simulation associated with an $SE(2)$ infinitesimally roto-flexible framework.]{\quad\quad\quad\quad\includegraphics[width=0.5\columnwidth]{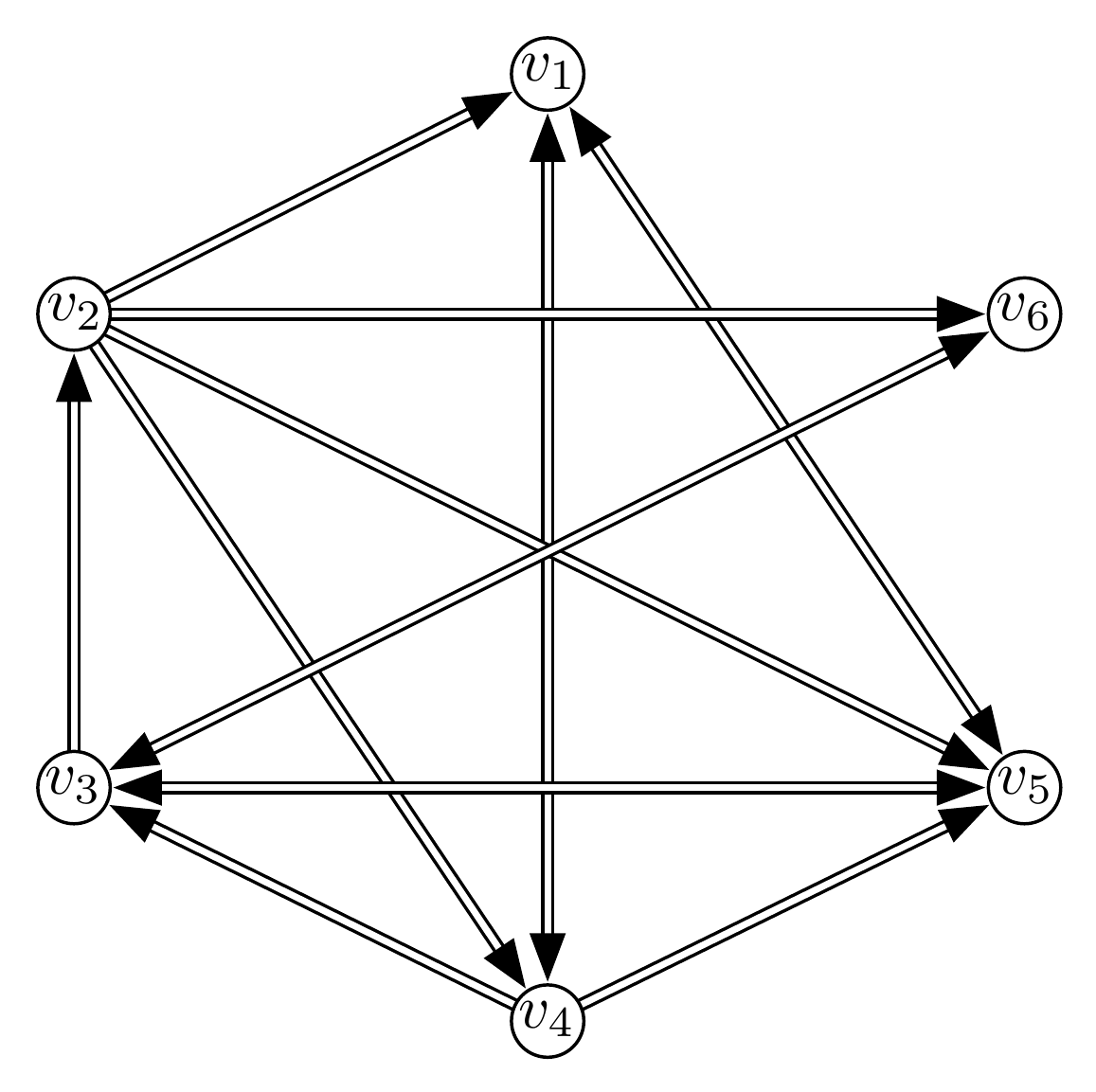}\label{fig:sims2}\quad\quad\quad\quad}\;
\subfigure[]\eB \; \subfigure[]\epB  \; \subfigure[]\trajB
  \caption{Results for the two simulation case studies. Top row: the case of an infinitesimally rigid framework. Bottom row: the case of a non-rigid framework. Note how in the first case (top) the bearing error vector $e(t)$ (Fig.~(b)) and the cumulative position estimation error $e_p(t)$ (Fig.~(d)) correctly converge to $0$. This can also be appreciated in Fig.~(d) where the trajectories of the estimated positions and orientations are shown superimposed to their true values. The results are of course completely different for case~II (bottom) where the estimation errors do not converge to $0$ because of the non-rigidity of the employed framework}\label{fig:sims_plots}
\end{center}
\end{figure*}

\section{Conclusion}\label{sec:conclusion}
This work proposed a distributed estimator for estimating the unscaled relative positions of a team of agents in a common reference frame.  The key feature of this work is the estimation only requires bearing measurements that are expressed in the local frame of each agent.  The estimator builds on a corresponding extension of rigidity theory for frameworks in $SE(2)$.  The main contribution of this work, therefore, was the characterization of infinitesimal rigidity in $SE(2)$.  It was shown that infinitesimal rigidity of the framework is related to the rank of the directed bearing rigidity matrix.  The null-space of that matrix describes the infinitesimal motions of an $SE(2)$ framework, and include the rigid body translations and dilations, in addition to coordinated rotations. 

To our knowledge, this is the first formal characterization of rigidity theory for $SE(2)$ frameworks.  We believe there are many natural and interesting directions for further research, including the development of analogous results from distance and parallel rigidity theory to this setup.  A future work of ours is considering how rigidity in $SE(2)$ can be used to develop distributed control laws from bearing measurements.

{\small
\bibliographystyle{IEEEtran}
\bibliography{./alias,./bibCustom}
}

\end{document}